\newtheorem{theorem}{Theorem}[section]
\newtheorem{lemma}[theorem]{Lemma}
\newtheorem{corollary}[theorem]{Corollary}
\newtheorem{proposition}[theorem]{Proposition}
\newtheorem{example}[theorem]{Example}
\theoremstyle{definition}
\newtheorem{definition}[theorem]{Definition}
\newtheorem*{assumption*}{Standing Assumption}
\theoremstyle{remark}
\newtheorem{remark}[theorem]{Remark}
\newtheorem{notation}[theorem]{Notation}
\def\relay#1#2{%
  \expandafter\def\csname #1\endcsname{#2}%
}
\def\makecal#1{%
	\relay{c#1}{\ensuremath{\mathcal{#1}}}%
}
\newcommand{\makebb}[1]{\relay{bb#1}{\ensuremath{\mathbb{#1}}}}
\forcsvlist{\makecal}{X,Y,K,N,R,F,Q,P,U,O,M,E}
\forcsvlist{\makebb}{R,N,C,Q,D,Z,F,T}
\newcommand{\makemathop}[1]{\expandafter\DeclareMathOperator\expandafter{\csname #1\endcsname}{#1}}
\forcsvlist{\makemathop}{id, coker, ev, End}
\def\newspan{\operatorname{span}}
\def\supp{\operatorname{supp}}
\newcommand{\N}{\mathbb{N}}
\newcommand{\Z}{\mathbb{Z}}
\newcommand{\R}{\mathbb{R}}
\newcommand{\BB}{\mathcal{B}}
\newcommand{\CC}{\mathcal{C}}
\newcommand{\OO}{\mathcal{O}}
\newcommand{\UU}{\mathcal{U}}
\numberwithin{equation}{section}
\title{$L_{2,\bbZ}\otimes L_{2,\bbZ}$ does not embed in 
$L_{2,\bbZ}$}
\author{Nathan Brownlowe}
\address[Nathan Brownlowe]{School of Mathematics and Applied Statistics, University of 
Wollongong, Australia}
\email{nathanb@uow.edu.au}
\author{Adam P W S{\o}rensen}
\address[Adam P W S{\o}rensen]{Department of Mathematics, University of Oslo, Norway, and School of Mathematics and Applied Statistics, University of Wollongong, Australia}
\email{apws@math.uio.no}
\keywords{Leavitt path algebra, graph $C^*$-algebra}
\subjclass[2010]{16B99, 46L05, 46L55}
\date{}
\begin{document}

\begin{abstract}
For a commutative ring $R$ with unit we investigate the embedding of tensor product algebras into the Leavitt algebra $L_{2,R}$. We show that the tensor product $L_{2,\bbZ}\otimes L_{2,\bbZ}$ does not embed in 
$L_{2,\bbZ}$ (as a unital $*$-algebra).
We also prove a partial non-embedding result for the more general $L_{2,R}\otimes L_{2,R}$.
Our techniques rely on realising Thompson's group $V$ as a subgroup of the unitary group of $L_{2,R}$.
\end{abstract}

\maketitle

\section{Introduction}

Since the beginning of the study of Leavitt path algebras (\cite{AbramsPinoOriginalLeavitt,AraMorenoPardoKTheoryGraphAlgberas}), it has been know that there is a strong connection between Leavitt path algebras and graph $C^*$-algebras. 
In particular, some of the remarkable classification results for purely infinite $C^*$-algebras have been shown to have partial algebraic analogues for Leavitt path algebras (\cite{AbramsAnhPardoMatrixRings, ALPS, RuizTomfordeInfiniteGraphs}). 
In \cite{BrownloweSorensenOne} and this paper the authors continue the tradition by studying possible algebraic analogs of Kirchberg's celebrated embedding Theorem: All separable, exact $C^*$-algebras embed into $\OO_2$.   

The first problem to overcome is to make sense of what is meant by the algebraic analogue. 
We certainly want to replace $\OO_2$ with $L_{2,R}$, but then have to decide how to translate the conditions of being separable and exact. 
Since $L_{2,R}$ has a countable basis as an $R$-module, it is only possible to embed $R$-algebras which have a countable basis into it, so we choose that as our algebraic version of separable.
What to replace exactness with seems a more prickly question.
Exactness of $C^*$-algebras has a multitude of equivalent but very different looking definitions \cite[Theorem~IV.3.4.18]{BlackadarOpAlgBook}, so we are forced to make a choice.
One definition, where the name exact comes from, says that a $C^*$-algebra $A$ is exact if the maximal tensor product by $A$ preserves short exact sequences. 
The natural algebraic translation of that is that $A$ is flat as a module. 
So a naive translation of Kirchberg's Embedding Theorem for involutive algebras might be: All flat involutive $R$-algebras with a countable basis embed $*$-homomorphicly into $L_{2,R}$.
However, we will see that this statement is false, at least when $R = \Z$ .
We also note that in the case where $R$ happens to be a field $K$ this formulation seems very ambitious since \emph{all} algebras over a field are flat.
While we do not have a counterexample to the naive translation in this case, it seems unlikely that all algebras with a countable basis should embed into $L_{2,R}$.

The main result of \cite{BrownloweSorensenOne} is that every Leavitt path algebra $L_R(E)$ of a countable graph $E$ over a commutive ring $R$ with unit embeds into the Leavitt algebra $L_{2,R}$.
The condition of a countable graph ensures the existence of a countable basis, and all Leavitt path algebras are flat as $R$-modules. (A concrete basis is constructed in \cite{AlahmadiAlsulamiLeavittDimension} for row-finite $E$, and this result is generalised to all graphs and all rings of coefficients in 
\cite[Corollary 1.5.14]{AbramsAraSilesBook}.)
In the present paper we move slightly outside of the realm of Leavitt path algebras, looking instead at tensor products of Leavitt path algebras. 
Through the work of \cite{AraCortinasTensorProducts}, we know that for $K$ a field the tensor product $L_{2,K} \otimes L_{2,K}$ is not isomorphic to any Leavitt path algebra. 
A potential embedding of $L_{2,K}\otimes L_{2,K}$ into $L_{2,K}$ is thus a natural question. 
Indeed, this question is an open problem in the subject, and has attracted enough attention to be included on the Graph Algebra Problem Page,\footnote{\tiny{http://www.math.uh.edu/$\sim$tomforde/GraphAlgebraProblems/GraphAlgebraProblemPage.html}} which is a repository of open problems for graph $C^*$-algebras and Leavitt path algebras that was established after the workshop ``Graph Algebras: Bridges between graph C*-algebras and Leavitt path algebras'' held at the Banff International Research Station in April, 2013.
This problem is also discussed in \cite{AbramsTheFirstDecade}. 

In our main result we provide an answer to this problem in the case $R=\Z$: there is no embedding of $L_{2,\Z}\otimes L_{2,\Z}$ into $L_{2,\Z}$. More precisely, we prove that there is no such unital $*$-algebra embedding. We restrict our attention to unital embeddings, even when considering $*$-homomorphisms over a ring $R$, because it is known that for $K$ a field any homomorphism from a unital ring into $L_{2,K}$ can be ``twisted\footnote{Formally: Let $A$ be a unital $R$-algebra and let $\phi \colon A \to L_{2,R}$ be a ring homomorphism. Since $\phi(1)$ is an idempotent, there exist $x,y \in L_{2,R}$ such that $xy = \phi(1)$, $yx = 1$. Then $\psi(a) = y \phi(a) x$ defines a unital ring homomorphism from $A$ to $L_{2,R}$. If $\phi$ is injective so is $\psi$.}'' into a unital homomorphism because all nonzero idempotents in $L_{2,K}$ are equivalent (see \cite[Theorem 3.5]{AraMorenoPardoKTheoryGraphAlgberas}). We discuss the idea of twisting $*$-homomorphisms into $L_{2,\Z}$ in Section~\ref{sec: added in proof}. Since $L_{2,\Z}$ is flat (as a $\Z$-module) and has a countable basis, a consequence of our nonembedding result is that the naive algebraic analogue to Kirchberg's Embedding Theorem described earlier does not hold.

We prove our main result by studying the unitaries in $L_{2,\Z}$; in particular we use that Thompson's group $V$ sits naturally as a subgroup  $\cU_V$ of the unitary group of $L_{2,R}$ for any $R$.
In our second result we eliminate the existence of any unital $*$-algebra embedding of $L_{2,R} \otimes L_{2,R}$ into $L_{2,R}$ under which the tensor $u\otimes v$ of any two full spectrum unitaries $u,v\in L_{2,R}$ is in $\cU_V$. 
For a ring of characteristic $0$ we can conclude that any potential unital $*$-algebra embedding of $L_{2,R}\otimes L_{2,R}$ into $L_{2,R}$ must send the tensor of full spectrum unitaries to unitaries in $L_{2,R}$ with nontrivial coefficients. 
We hope these results and their proofs will inform future work on solving ``does $L_{2,R}\otimes L_{2,R}$ embed into $L_{2,R}$'' for rings other than $\bbZ$.

\section{Preliminaries on $L_{2,R}$ and Thompson's group $V$}\label{sec: prelim}

In this section we recall the definition of $L_{2,R}$, and we discuss a representation of 
$L_{2,R}$ by endomorphisms on the free $R$-module generated 
by the infinite paths in the graph underlying $L_{2,R}$. We then discuss the 
unitaries in $L_{2,R}$ and their relationship to 
Thompson's group $V$.   

\begin{assumption*}
Throughout this paper $R$ will always be a commutative ring with unit.
\end{assumption*}

\subsection{The algebra $L_{2,R}$ and a representation}\label{subsec: the repn 
of L2}

We will be focussed on the graph
\vspace{-0.3cm}

\begin{equation}\label{eq: 2 loop graph}
\begin{tikzpicture}[baseline=(current  bounding  box.center)]

\node[circle, draw=black,fill=black, inner sep=1pt] (u) at (0,0) {};
    
\draw[-latex,thick] (u) .. controls +(225:1.5cm) and +(135:1.5cm) .. (u);
\draw[-latex,thick] (u) .. controls +(315:1.5cm) and +(45:1.5cm) .. (u);
        
\node at (-1,0) {$a$};
\node at (1,0) {$b$};

\end{tikzpicture}
\end{equation}
\vspace{-0.3cm}

\noindent and its Leavitt path algebra $L_{2,R}$, which is the universal 
$R$-algebra generated by elements $a,a^*,b,b^*$ satisfying 

\[
a^*a=b^*b=1=aa^*+bb^*. 
\]
Note that the relations imply that $a^*b = 0 = b^*a$.
When $R$ is a field $K$, $L_{2,K}$ is the 
Leavitt
algebra of module type $(1, 2)$ (see \cite{LeavittOriginal}). For every word $\alpha=\alpha_1\dots\alpha_n$ in $a$ and $b$ we set $\alpha^*:=\alpha_n^*\dots\alpha_1^*$. Every element of $L_{2,R}$ is a finite $R$-linear combination of elements of the form $\alpha\beta^*$, where $\alpha$ and $\beta$ are words in $a$ and $b$. The map $\alpha\beta^*\mapsto \beta\alpha^*$ extends to an $R$-linear 
involution of $L_{2,R}$. We write $L_{2,R}\otimes L_{2,R}$ to mean the tensor 
product balanced over $R$. The tensor product $L_{2,R}\otimes L_{2,R}$ is an 
involutive $R$-algebra with $(x\otimes y)^*=x^*\otimes y^*$.

The infinite paths in the graph underlying $L_{2,R}$ is the set $\{a,b\}^\N$. 
We denote the set of finite paths by $\{a,b\}^*$, and by $|\alpha|$ the number 
of edges, or length, of a path $\alpha\in\{a,b\}^*$. For $\alpha\in \{a,b\}^*$ 
and 
$\xi\in\{a,b\}^\N$, $\alpha \xi\in\{a,b\}^\N$ denotes the obvious concatenation. 
For each $\alpha\in\{a,b\}^*$ we denote by $Z(\alpha)$ the cylinder set
\[
Z(\alpha)=\{\alpha \xi:\xi\in\{a,b\}^\N\}\subset \{a,b\}^\N.
\]
 
We denote by $M$ the uncountably generated free $R$-module generated by $\{a,b\}^\N$, and by 
$\End_R(M)$ the endomorphism ring of $M$. Consider the maps $T_a,T_b \colon M\to M$ 
given by
\[
T_a(\xi)=a\xi\quad\text{ and }\quad T_b(\xi)=b\xi
\]
for all $\xi\in \{a,b\}^\N$, and extended to $R$-linear combinations in the 
natural way. 
Also 
consider $T_a^*,T_b^* \colon M\to M$ given by
\[
T_a^*(\xi)=
\begin{cases}
\xi' & \text{if $\xi=a\xi'$ for some $\xi'\in\{a,b\}^\N$}\\
0 & \text{otherwise}
\end{cases}
\]
and
\[
T_b^*(\xi)=
\begin{cases}
\xi' & \text{if $\xi=b\xi'$ for some $\xi'\in\{a,b\}^\N$}\\
0 & \text{otherwise}
\end{cases}
\]
for all $\xi\in \{a,b\}^\N$, and extended to $R$-linear combinations in the 
natural way. Then $T_a,T_a^*,T_b,T_b^*\in\End_R(M)$, and it is straightforward 
to check that 
\[
T_a^*T_a=T_b^*T_b=1=T_aT_a^*+T_bT_b^*.
\]
The universal property of $L_{2,R}$ gives us a unital homomorphism 
$\pi_T \colon L_{2,R}\to\End_R(M)$ satisfying $\pi_T(a)=T_a$, $\pi_T(a^*) = T_a^*$, $\pi_T(b)=T_b$ and $\pi_T(b^*)=T_b^*$. It 
follows from the Cuntz-Krieger Uniqueness Theorem 
\cite[Theorem~6.5]{TomfordeLeavittOverRing} that $\pi_T$ is injective.

\subsection{Thompson's group $V$ and unitaries in $L_{2,R}$}\label{subsec: V 
and 
unitaries}

We are concerned with 
unitaries in $L_{2,R}$, which are elements $u$ satisfying $u^*u=uu^*=1$. There 
are 
two canonical subsets of the unitary group 
$\UU(L_{2,R})$ of $L_{2,R}$. We denote by $\cU_1$ the subset of 
unitaries that 
can be written without 
coefficients:
\[
\cU_1 = \left\{ u\in \cU(L_{2,R}) : u=\sum_{i=1}^n \alpha_i\beta_i^* 
\text{ for some distinct pairs } \alpha_i, \beta_i 
	\in \{a,b\}^* \right\}.
\]
The other subset, in fact a subgroup, is a homomorphic image of Thompson's group $V$. Thompson introduced $V$, as well as the groups $F$ and $T$, in unpublished notes in 1965. A account of his work can be found in \cite{CannonFloydParry}. Thompson's group $V$ can be realised as a collection of homeomorphisms of the Cantor set. Following \cite{NekrashevychCuntzPimsner} we think of elements of Thompson's 
group $V$ as tables 
\[
	\begin{pmatrix}
		\beta_1 & \beta_2 & \cdots & \beta_n \\
		\alpha_1 & \alpha_2 & \cdots & \alpha_n
	\end{pmatrix}
\]
where $\alpha_i, \beta_i\in \{a,b\}^*$ such that 
\[
	\{a,b\}^\N = \bigsqcup_{i=1}^n Z(\alpha_i) = \bigsqcup_{i=1}^n Z(\beta_i). 
\] 
Each table determines a homeomorphism of $\{a,b\}^\N$ which maps an infinite 
path of the form $\beta_i \xi$ to $\alpha_i \xi$. The set of homeomorphisms defined 
by tables is a group under composition. The calculation that gives the product 
of two tables as another table mirrors exactly the one which gives the product 
of two spanning elements $\sum_{i=1}^n\alpha_i\beta_i^*$ as another spanning 
element; that is, which shows that $\cU_1$ is a subgroup. So the map
\[
	\begin{pmatrix}
		\beta_1 & \beta_2 & \cdots & \beta_n \\
		\alpha_1 & \alpha_2 & \cdots & \alpha_n
	\end{pmatrix}\mapsto \sum_{i=1}^n\alpha_i\beta_i^*
\]     
is a group homomorphism of $V$ into $\cU(L_{2,R})$. We denote its image by 
$\UU_V$. By definition of $\cU_1$ and $\cU_V$ we have $\cU_V \subseteq \cU_1$. 
We now show that the reverse containment holds when $R$ has characteristic 
0. The case for fields is covered by \cite[Lemma 3.3]{PardoTheIsmomorphismProblem}.  

\begin{lemma}\label{lem: char 0 means UV=U1}
If $R$ has characteristic $0$, then $\cU_V = \cU_1$.
\end{lemma}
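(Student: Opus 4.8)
The plan is to establish the nontrivial inclusion $\cU_1\subseteq\cU_V$; the reverse containment $\cU_V\subseteq\cU_1$ has already been recorded. So fix $u=\sum_{i=1}^n\alpha_i\beta_i^*\in\cU_1$ with the pairs $(\alpha_i,\beta_i)$ distinct, put $N=\max_i\max\{|\alpha_i|,|\beta_i|\}$, and write $\beta\preceq\gamma$ when $\beta$ is a prefix of $\gamma$. The goal is to show that $\{Z(\beta_i)\}_{i=1}^n$ and $\{Z(\alpha_i)\}_{i=1}^n$ are each partitions of $\{a,b\}^\N$. Once that is done, $\left(\begin{smallmatrix}\beta_1 & \cdots & \beta_n\\ \alpha_1 & \cdots & \alpha_n\end{smallmatrix}\right)$ is a legitimate table, hence represents an element of $V$, and the homomorphism $V\to\cU(L_{2,R})$ sends it to $\sum_{i=1}^n\alpha_i\beta_i^*=u$; so $u\in\cU_V$.

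The heart of the matter is to analyse $u$ after compressing by an arbitrary word $\gamma$ of length $N$. Since $|\beta_i|\le N$ for every $i$, the Leavitt relations give $\beta_i^*\gamma=\gamma_i'$ when $\gamma=\beta_i\gamma_i'$ and $\beta_i^*\gamma=0$ otherwise, so $u\gamma=\sum_{i:\,\beta_i\preceq\gamma}\alpha_i\gamma_i'$ is a sum, over $S_\gamma:=\{i:\beta_i\preceq\gamma\}$, of (not necessarily distinct) words; collecting like terms, write $u\gamma=\sum_\ell n_\ell z_\ell$ with the $z_\ell$ distinct words, $n_\ell\ge 1$ in $\N$, and $\sum_\ell n_\ell=|S_\gamma|$. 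From $u^*u=1$ and $\gamma^*\gamma=1$ we get $(u\gamma)^*(u\gamma)=\gamma^*u^*u\gamma=1$, i.e.\ $\sum_{\ell,\ell'}n_\ell n_{\ell'}\,z_\ell^*z_{\ell'}=1$.

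Now I would invoke the $\Z$-grading on $L_{2,R}$ determined by $\deg a=\deg b=1$, $\deg a^*=\deg b^*=-1$ (well defined because the defining relations are homogeneous of degree $0$). For $\ell\ne\ell'$ the element $z_\ell^*z_{\ell'}$ is either $0$ or homogeneous of degree $|z_{\ell'}|-|z_\ell|\ne 0$, while $z_\ell^*z_\ell=1$; comparing degree-$0$ components in the identity above gives $\bigl(\sum_\ell n_\ell^2\bigr)\cdot 1=1$ in $L_{2,R}$. The canonical map $R\to L_{2,R}$ is injective (for instance since $\pi_T$ is faithful and $\pi_T(r\cdot 1)=r\cdot\id_M$), so $\sum_\ell n_\ell^2=1$ in $R$, and as $R$ has characteristic $0$ this forces $\sum_\ell n_\ell^2=1$ in $\Z$; with each $n_\ell\ge 1$ this means there is exactly one $\ell$, with $n_\ell=1$, whence $|S_\gamma|=1$. (If instead $S_\gamma=\emptyset$ then $u\gamma=0$ and $1=(u\gamma)^*(u\gamma)=0$, which is impossible; so $|S_\gamma|=1$ for every word $\gamma$ of length $N$.)

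Knowing $|S_\gamma|=1$ for all $\gamma\in\{a,b\}^N$ says precisely that each of the $2^N$ basic cylinders $Z(\gamma)$ lies in exactly one $Z(\beta_i)$ (and, since $|\beta_i|\le N$, is disjoint from the others), so the $\beta_i$ are pairwise incomparable and $\bigsqcup_i Z(\beta_i)=\{a,b\}^\N$. Running the same argument on the unitary $u^*=\sum_i\beta_i\alpha_i^*$, whose defining pairs $(\beta_i,\alpha_i)$ are again distinct, yields $\bigsqcup_i Z(\alpha_i)=\{a,b\}^\N$ as well, and we conclude as in the first paragraph. The one genuinely delicate point — and the only place characteristic $0$ is used — is the implication $\bigl(\sum_\ell n_\ell^2\bigr)\cdot 1=1\Rightarrow|S_\gamma|=1$: in positive characteristic a sum of several squares can equal $1$, which is exactly the slack one would need to manufacture a unitary in $\cU_1\setminus\cU_V$. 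Everything else is routine bookkeeping with the Leavitt relations and the grading.
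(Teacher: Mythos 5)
Your proof is correct, but it takes a genuinely different route from the paper's. The paper works entirely inside the representation $\pi_T$ on the free module $M$ over the infinite paths: it first gets $\bigcup_i Z(\beta_i)=\{a,b\\}^\N$ by evaluating $\pi_T(u^*u)$ at a path outside the union, then refines the $\beta_i$ to a common length and shows the refined words are pairwise distinct by computing $\pi_T(u^*u)$ on a basis vector $\nu_1\xi$ and observing that, in characteristic $0$, a coefficient $j\ge 2$ plus further positive contributions cannot produce $\nu_1\xi$; the same is then run for the $\alpha_i$. You instead stay inside $L_{2,R}$: compressing $u$ by each word $\gamma$ of maximal length $N$ and extracting the degree-zero component of $(u\gamma)^*(u\gamma)=1$ with respect to the canonical $\Z$-grading gives $\sum_\ell n_\ell^2=1$, and injectivity of $\Z\to R\to L_{2,R}$ (where characteristic $0$ enters) forces $|S_\gamma|=1$ for every $\gamma$, which yields disjointness and covering of the $Z(\beta_i)$ in one stroke; the $\alpha_i$ are handled by the same argument applied to $u^*$. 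What your approach buys is independence from the path representation (you only need that $r\mapsto r\cdot 1$ is injective, which follows already from $M$ being a free $R$-module, without invoking faithfulness of $\pi_T$), a single uniform computation covering both requirements, a clean isolation of where characteristic $0$ is used (exactly the step $\sum_\ell n_\ell^2=1$, which matches the failure exhibited in Example~\ref{ex:UV not U1}), and an explicit final step verifying that the resulting table defines an element of $V$ mapping to $u$; what the paper's approach buys is that it uses the same $\pi_T$-evaluation machinery that reappears in Lemma~\ref{lem: distinct betas} and Proposition~\ref{prop: char of unitaries in L2Z}, keeping the toolkit of the paper uniform. Two cosmetic points: your phrase ``homogeneous of degree $|z_{\ell'}|-|z_\ell|\neq 0$'' is slightly loose, since distinct words of equal length give the product $0$ rather than a nonzero-degree element, but your ``either $0$ or'' already covers this; and the appeal to faithfulness of $\pi_T$ is stronger than needed, as mere existence of the unital homomorphism suffices.
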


\begin{proof}
We need to show that if $u=\sum_{i=1}^n\alpha_i\beta_i^*$ is unitary, then 
\[
\{a,b\}^\N = \bigsqcup_{i=1}^n Z(\alpha_i) = \bigsqcup_{i=1}^n Z(\beta_i).
\] 
We work with the representation $\pi_T \colon L_{2,R}\to \End_R(M)$. We 
know that $\pi_T(u^*u)=\pi_T(uu^*)$ is the identity endomorphism of $M$. 
First suppose 
$\bigcup_{i=1}^nZ(\beta_i)\not= \{a,b\}^\N$. Then for all 
$\xi\in\{a,b\}^\N\setminus \big(\bigcup_{i=1}^nZ(\beta_i)\big)$ we have $T_{\beta_i}^*\xi=0$, and hence
 \[
\pi_T(u^*u)\xi=\pi_T(u^*)\Big(\sum_{i=1}^nT_{\alpha_i}
(T_{\beta_i}^*\xi)\Big)=0,
\]
which contradicts that $\pi_T(u^*u)$ is the identity endomorphism on $M$. So 
we have 
$\bigcup_{i=1}^nZ(\beta_i)= \{a,b\}^\N$. A similar argument using 
$\pi_T(uu^*)$ shows that $\bigcup_{i=1}^nZ(\alpha_i)= \{a,b\}^\N$.

We now claim that $\{Z(\beta_i):1\le i\le n\}$ are mutually disjoint. Let 
$m=\max\{|\beta_i|:1\le i\le n\}$, and for each $1\le i\le n$ we let $X_i$ 
denote the set of paths of length $m-|\beta_i|$. Since each $\sum_{\gamma\in 
X_i}\gamma\gamma^*=1$, we have
\[
u=\sum_{i=1}^n\alpha_i\beta_i^*=\sum_{i=1}^n\alpha_i\Big(\sum_{\gamma\in 
X_i}\gamma\gamma^*\Big)\beta_i^*=\sum_{i=1}^n\sum_{\gamma\in 
X_i}\alpha_i\gamma(\beta_i\gamma)^*.
\]
We relabel this sum and write $u=\sum_{i=1}^p\mu_i\nu_i^*$, where each 
$\nu_i$ has the same length. We will prove that each $\nu_i$ is 
distinct. Suppose not for contradiction. By relabelling we can assume without 
loss of generality that $\nu_1=\nu_2=\dots=\nu_j$ for some $j\ge 
2$, and 
$\nu_1\not=\nu_k$ for all $j+1\le k\le p$. Then for any 
$\xi\in\{a,b\}^\N$ 
we have
\begin{equation}\label{eq: distinct betas zero char}
\pi_T(u^*u)\nu_1\xi=\pi_T(u^*)\Big(\sum_{i=1}^j\mu_i\xi\Big)
=\sum_{k=1}^p\sum_{i=1}^j 
T_{\nu_k}T_{\mu_k}^*\mu_i\xi=
j\nu_1\xi + x,
\end{equation}
where $x\in M$ is a linear combination with positive integer coefficients of 
basis elements of the form 
$\nu_k\eta$ for some $1\le k\le p$ and $\eta\in\{a,b\}^\N$. Since the 
characteristic of $R$ is 0, it 
follows that the expression on the right of (\ref{eq: distinct betas zero 
char}) is not 
$\nu_1\xi$. But this contradicts that $\pi_T(u^*u)$ is the identity 
endomorphism. Hence each $\nu_i$ is distinct. 

Now, each $\nu_j$ has the form $\beta_i\gamma$. So if each $\nu_j$ is 
distinct, and because they are all of the same length, then it follows that no 
$\beta_i$ can be the subword of any other $\beta_k$ for $1\le i,k\le n$. Hence 
$\{Z(\beta_i):1\le i\le n\}$ are mutually disjoint, as claimed. A similar 
argument using 
$\pi_T(uu^*)$ shows that the $Z(\alpha_i)$ must be mutually disjoint.
\end{proof}

The following example shows that the assumption of characteristic $0$ is necessary. 

\begin{example} \label{ex:UV not U1}
Consider the field $K = \Z/2\Z$. 
In $M_4(\Z/2\Z)$ the matrix 
\[
\left(\begin{matrix}
0 & 1 & 1 & 1 \\
1 & 0 & 1 & 1 \\
1 & 1 & 0 & 1 \\
1 & 1 & 1 & 0 
\end{matrix} 
\right)
\]
is a self-adjoint unitary.
We can then use an embedding $M_4(\bbZ/2\Z)\hookrightarrow L_{2,\Z/2\Z}$ (see \cite[Example 5.3]{BrownloweSorensenOne}) to get a unitary $u \in L_{2,\Z/2\Z}$ given by
\[
 u = (ab+ba+bb)a^*a^*+(aa+ba+bb)b^*a^*+(aa+ab+bb)a^*b^*+(aa+ab+ba)b^*b^* .
\]
Clearly $u \in \cU_1$. 
We claim that $u \notin \cU_V$.
When we represent the elements of $L_{2,\Z/2\Z}$ as endomorphisms of the free module with basis $\{a,b\}^\N$, as discussed in Section \ref{subsec: the repn of L2}, the elements of $\cU_V$ will map basis elements to basis elements, but for any $\xi \in \{a,b\}^\N$ we see that
\[
  u aa\xi = ab\xi + ba\xi + bb\xi. 
\]
Hence $u \notin \cU_V$.

\end{example}

\section{Embedding the Laurent polynomials $L_R[w,w^{-1},z,z^{-1}]$}\label{sec: LR(w,z)}

We prove a positive and negative embedding result for the Laurent polynomials 
$L_R[w,w^{-1},z,z^{-1}]$ in two commuting variables over $R$. We first show that $L_R[w,w^{-1},z,z^{-1}]$ embeds 
into 
$L_{2,R}\otimes L_{2,R}$. We then show that there is no $*$-algebraic embedding of 
$L_R[w,w^{-1},z,z^{-1}]$ into $L_{2,R}$ mapping the unitaries $w$ and $z$ 
into the image $\cU_V\subset \cU(L_{2,R})$ of Thompson's group $V$. 

\begin{proposition}\label{prop: L2R embeds into tensor}
Let $R$ be a commutative ring with unit. Then $L_R[w,w^{-1},z,z^{-1}]$ embeds unitally 
into $L_{2,R} \otimes L_{2,R}$ as $*$-algebras.
\end{proposition}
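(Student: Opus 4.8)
The plan is to reduce the two-variable statement to a one-variable one and then tensor. Writing $L_R[w,w^{-1},z,z^{-1}]$ as the group algebra $R[\Z^2]$ and using $\Z^2=\Z\times\Z$, there is a canonical isomorphism of $*$-algebras
$$L_R[w,w^{-1},z,z^{-1}]\;\cong\; L_R[w,w^{-1}]\otimes L_R[z,z^{-1}],$$
sending $w^iz^j\mapsto w^i\otimes z^j$. So it suffices to produce one unital injective $*$-algebra homomorphism $\phi\colon L_R[w,w^{-1}]\to L_{2,R}$ and then take $\phi\otimes\phi$.

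To build $\phi$ I would send $w$ to a unitary with no polynomial relations. Since $L_R[w,w^{-1}]=R[\Z]$, a unital $R$-algebra homomorphism out of it is determined by the (necessarily invertible) image of $w$, and it is a $*$-homomorphism, with $w^*=w^{-1}$, precisely when $w$ is sent to a unitary; so the only real task is to pick $u\in\cU(L_{2,R})$ for which $\{u^k:k\in\Z\}$ is $R$-linearly independent. I would take $u\in\cU_V$ to be the image of an element $g$ of Thompson's group $V$ that has infinite order and an infinite orbit on $\{a,b\}^\N$ --- for instance (the image in $V$ of) the standard generator of Thompson's group $F$, given by the table $\left(\begin{smallmatrix} aa & ab & b\\ a & ba & bb\end{smallmatrix}\right)$, i.e.\ the homeomorphism with $aa\xi\mapsto a\xi$, $ab\xi\mapsto ba\xi$, $b\xi\mapsto bb\xi$, whose image is $u=aa^*a^*+bab^*a^*+bbb^*$. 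For injectivity of $\phi$ I would pass to $\End_R(M)$ via $\pi_T$ and use the point $\xi_0=baaa\cdots$: one computes $g^k\xi_0=b^{\,k+1}aaa\cdots$ for $k\ge 0$, so $\xi_0$ has infinite forward orbit; this forces $g$ to have infinite order, and hence forces the bi-infinite orbit $\{g^k\xi_0:k\in\Z\}$ to consist of pairwise distinct basis vectors of $M$. Then for $\sum_k r_k w^k\in L_R[w,w^{-1}]$ we have $\pi_T\!\big(\sum_k r_k u^k\big)\xi_0=\sum_k r_k\,g^k\xi_0$, which vanishes in $M$ only if every $r_k=0$; since $\pi_T$ is injective, $\phi$ is injective.

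Finally I would check that $\phi\otimes\phi\colon L_R[w,w^{-1}]\otimes L_R[z,z^{-1}]\to L_{2,R}\otimes L_{2,R}$ does the job. It is unital, and it respects the involution because $(x\otimes y)^*=x^*\otimes y^*$ on both sides. For injectivity, factor it as $(\phi\otimes\id_{L_{2,R}})\circ(\id_{L_R[w,w^{-1}]}\otimes\,\phi)$: the second map is injective because $L_R[w,w^{-1}]$ is free, hence flat, as an $R$-module, and the first is injective because $L_{2,R}$ is flat as an $R$-module, as recalled in the introduction.

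I expect the only genuinely delicate step to be the choice of $g$ and the verification that it acts with an infinite orbit; once such an \emph{aperiodic} unitary of $L_{2,R}$ is in hand, the well-definedness of $\phi$, its compatibility with the involution, and the flatness bookkeeping for the tensor factor are all routine.
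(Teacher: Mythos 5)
Your argument is correct, and its skeleton is the same as the paper's: identify $L_R[w,w^{-1},z,z^{-1}]$ with $L_R[w,w^{-1}]\otimes L_R[z,z^{-1}]$, embed the single-variable Laurent ring unitally and $*$-preservingly into $L_{2,R}$, and then use flatness (both $L_R[w,w^{-1}]$ and $L_{2,R}$ are free, hence flat, $R$-modules) to conclude that the tensor square of the embedding is again an embedding; your factorisation of $\phi\otimes\phi$ as $(\phi\otimes\id)\circ(\id\otimes\phi)$ is just a flip-free rephrasing of Remark~\ref{rem: facts about flats!}. The one genuine difference is the middle step: the paper simply cites \cite[Theorem~4.1]{BrownloweSorensenOne} for the embedding $L_R[w,w^{-1}]\hookrightarrow L_{2,R}$, whereas you reprove that ingredient from scratch by sending $w$ to the unitary $u=aa^*a^*+bab^*a^*+bbb^*\in\cU_V$ coming from a table in Thompson's group $V$ with an infinite orbit. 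Your verification goes through: $\pi_T(u)$ permutes the basis $\{a,b\}^\N$ of $M$ exactly as the tabled homeomorphism $g$ does, $g^k(baaa\cdots)=b^{k+1}aaa\cdots$ for $k\ge 0$, so the two-sided orbit of $\xi_0=baaa\cdots$ consists of distinct basis vectors, and evaluating $\pi_T\big(\sum_k r_k u^k\big)$ at $\xi_0$ kills all coefficients; note that for this last step you only need that $\pi_T$ is a homomorphism, not that it is injective, so that appeal is redundant (though harmless). The trade-off is the expected one: the paper's proof is two lines but rests on an external theorem, while yours is self-contained and exhibits a concrete aperiodic unitary --- essentially a special case of the ``full spectrum'' unitaries the paper invokes later --- at the cost of an explicit orbit computation.
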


\begin{remark}\label{rem: facts about flats!}
To prove Proposition~\ref{prop: L2R embeds into tensor} we use the following 
general observation about flat modules. 
If $\cE$ is a flat right $R$-module, and $\phi \colon \cM \to \cN$ is an embedding of left $R$-modules, then from the exact sequence $0\to \cM\hookrightarrow \cN$, we 
get 
the exact sequence 
\[
	0 \to \cE \otimes \cM \stackrel{\id \otimes \phi}{\to} \cE \otimes \cN.
\]
Hence $\cE \otimes \cM$ embeds into $\cE \otimes \cN$. Now if $\cE$ and $\cM$ 
are flat $R$-modules (so both left and right $R$-modules), and $\cE$ embeds 
into $\cM$, then $\cE\otimes \cE$ embeds into $\cE\otimes \cM$ and $\cM\otimes 
\cE$ embeds into $\cM\otimes \cM$. Using the flip isomorphism then gives an 
embedding 
\[
	\cE \otimes \cE \hookrightarrow \cE \otimes \cM 
	\stackrel{\cong}{\longrightarrow} 
	\cM \otimes \cE \hookrightarrow \cM \otimes \cM. 
\]
Composing with a flip isomorphism once more, we see that if $\psi \colon \cE \to \cM$ is an embedding, then $\psi \otimes \psi$ is an embedding of $\cE \otimes \cE$ into $\cM \otimes \cM$.
\end{remark}

\begin{proof}[Proof of Proposition~\ref{prop: L2R embeds into tensor}]
We have $L_R[w,w^{-1},z,z^{-1}]\cong L_R[w,w^{-1}] 
\otimes 
L_R[z,z^{-1}]$, and it follows from \cite[Theorem~4.1]{BrownloweSorensenOne} 
that $L_R[w,w^{-1}]$ embeds unitally into $L_{2,R}$ as $*$-algebras. 
Moreover, $L_R[w,w^{-1}]$ and $L_{2,R}$ have countable bases and hence are flat.
So the result follows from Remark~\ref{rem: facts about flats!} 
applied to $\cE=L_R[w,w^{-1}]$ and $\cM=L_{2,R}$.
\end{proof}  

We state our nonembedding result.

\begin{theorem}\label{thm: nonembedding of Laurent}
Let $R$ be a commutative ring with unit. There does not exist a $*$-algebraic embedding 
\[
	\phi \colon L_R[w,w^{-1},z,z^{-1}] \to L_{2,R}
\]
with $\phi(w), \phi(z) \in \cU_V$. 
\end{theorem}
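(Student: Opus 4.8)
The plan is to derive a contradiction by analysing the dynamics of the commuting unitaries $\phi(w)$ and $\phi(z)$ as homeomorphisms of the Cantor set $\{a,b\}^\N$. Since $\phi(w),\phi(z)\in\cU_V$, they come from elements of Thompson's group $V$, so composing $\phi$ with the inverse of the homomorphism $V\to\cU_V$ gives two commuting elements $g_w,g_z\in V$ together with the fact that, in $L_{2,R}$, the unital $*$-subalgebra they generate is a quotient of $L_R[w,w^{-1},z,z^{-1}]$. First I would record the rigidity consequence of commutativity inside $V$: two homeomorphisms of the Cantor set that commute and each lie in $V$ have a constrained orbit structure, and in particular the group $\langle g_w,g_z\rangle$ they generate is abelian (being the image of $\Z^2$). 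The key point I want to extract is about \emph{periodic points}: every element of $V$ has a (clopen, finite) set of periodic points of each period, and an infinite-order element of $V$ necessarily has nontrivial periodic behaviour on part of $\{a,b\}^\N$ — more precisely, $V$ contains no element acting freely on all of the Cantor set, because any tabular homeomorphism fixes or periodically permutes the cylinder sets it is built from, forcing a fixed clopen set on which it has a periodic point by a pigeonhole/Baire-type argument.

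The crux of the argument is then to localise. On a clopen set $Z(\alpha)\subseteq\{a,b\}^\N$ where $g_w$ has a periodic point, pass to a power $g_w^k$ fixing a point $\xi_0\in Z(\alpha)$ pointwise-nearby; since $g_z$ commutes with $g_w$, it permutes the (finite or at least clopen) fixed-point set of $g_w^k$, so after a further power $g_z^\ell$ we obtain a common fixed clopen neighbourhood, hence a common fixed point $\xi_0$, for the commuting pair $g_w^k,g_z^\ell$. Now I would use the $*$-algebra relation: in $L_R[w,w^{-1},z,z^{-1}]$ the element $1-w^k$ is not a zero divisor against $1-z^\ell$ in the appropriate sense, yet after applying $\pi_T$ and evaluating the corresponding endomorphisms near the common fixed point one finds that $\phi(w^k-1)$ and $\phi(z^\ell-1)$ annihilate overlapping pieces of $M$, producing a relation in $L_{2,R}$ (via injectivity of $\pi_T$) that is not satisfied in the Laurent ring — contradicting injectivity of $\phi$. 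Concretely, I expect to show that $\phi(1-w^k)$ kills the basis vector $\xi_0$ while $\phi(1-z^\ell)$ also behaves trivially there in a way incompatible with $\phi((1-w^k)p+(1-z^\ell)q)\neq 0$ for suitable Laurent polynomials $p,q$ — equivalently, that the localisation at the common fixed point factors $\phi$ through a proper quotient.

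The main obstacle will be the periodic-point / common-fixed-point step: controlling the fixed-point sets of powers of a commuting pair in $V$ simultaneously, and ensuring the resulting common fixed point is genuinely "seen" by the representation $\pi_T$ in a way that yields an algebraic relation. One has to be careful that $R$ is an arbitrary commutative ring, so all arguments must be over $\End_R(M)$ rather than over $\mathbb{C}$; the characteristic-$0$ tools of Lemma~\ref{lem: char 0 means UV=U1} are not available, but we do not need them here since we are \emph{assuming} membership in $\cU_V$, which is exactly what makes the dynamical description exact and coefficient-free. A secondary technical point is bookkeeping with tables of possibly different lengths: as in the proof of Lemma~\ref{lem: char 0 means UV=U1}, I would first refine all tables to a common level $m$, so that $g_w,g_z$ act as permutations of the $2^m$ cylinder sets $Z(\gamma)$, $|\gamma|=m$, combined with a "shift on tails" that is eventually periodic; this reduces the periodic-point analysis to finite permutation group theory on $2^m$ symbols plus a routine tail argument.
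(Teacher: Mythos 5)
There is a genuine gap at the crux of your argument: the step that converts dynamical information at a common fixed point into a nonzero element of $\ker\phi$. Knowing that $g_w^k$ and $g_z^\ell$ share a fixed point $\xi_0$ only tells you that $\pi_T\bigl(\phi(w^k-1)\bigr)\xi_0=0$ and $\pi_T\bigl(\phi(z^\ell-1)\bigr)\xi_0=0$, i.e.\ that two (typically nonzero) elements of $L_{2,R}$ kill one common basis vector of $M$. That is not an algebraic relation: faithfulness of $\pi_T$ is a statement about elements being zero, not about vectors being killed, so no identity in $L_{2,R}$ and hence no nonzero Laurent polynomial in $\ker\phi$ is produced. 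Your suggested target $\phi\bigl((1-w^k)p+(1-z^\ell)q\bigr)=0$ would require a genuinely global cancellation, and local behaviour at one point cannot force it (e.g.\ for a single infinite-order $g=g_w=g_z$ with fixed endpoints, nothing of this form vanishes for trivial reasons). What the paper actually uses is much stronger structure theory for a commuting pair in $V$: Brin's bound on lengths of finite orbits (to pass to powers with no nontrivial finite orbits), and Bleak--Salazar-D\'{\i}az's results that the components of support of commuting elements are either equal or disjoint, and that on a common component suitable powers of the two elements \emph{coincide}. This yields an orthogonal family of projections $p_k$ commuting with both unitaries and an explicit annihilating polynomial
\[
q(w,z)=(wz-z)\Bigl(\prod_k (w^{m_k}-z^{n_k})\Bigr)(z-wz),
\]
evaluated termwise over the decomposition $u=\sum_k p_kup_k$, $v=\sum_k p_kvp_k$. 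Nothing in your proposal supplies a substitute for the ``equal powers on common components'' input, and without it the contradiction with injectivity does not materialise.

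Two secondary points. First, your reduction ``refine all tables to a common level $m$ so that $g_w,g_z$ act as permutations of the $2^m$ cylinders combined with a shift on tails'' is false for infinite-order elements: refining makes the sources $\beta_i$ have length $m$, but the targets $\alpha_i$ still have varying lengths, and an element that permuted level-$m$ cylinders while acting trivially on tails would have finite order. Second, the common-fixed-point step itself is shaky as stated: the fixed-point set of $g_w^k$ is closed but in general neither finite nor clopen (e.g.\ when $g_w$ is supported on $Z(a)$ it contains all of $Z(b)$), so ``$g_z$ permutes it, hence a power of $g_z$ fixes a point of it'' needs an actual argument. These could likely be repaired, but the main gap above is the real obstruction.
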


The result is a consequence of the structure of Thompson's group $V$.
The following proposition extracts results about $V$ from \cite{BleakSalazarFreeProducts} and reformulates them in to our setting. 

\begin{proposition} \label{prop:commutingInV}
If $u,v \in \cU_V$ commute, then there exists a nonzero polynomial $q(w,z)\in 
R[w,z]$ such that $q(u,v) = 0$. 
\end{proposition}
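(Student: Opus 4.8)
I want to show that two commuting elements $u, v \in \cU_V$ satisfy a nontrivial polynomial relation $q(u,v) = 0$ with $q \in R[w,z]$. Since $\cU_V$ is the homomorphic image of Thompson's group $V$, the elements $u$ and $v$ come from homeomorphisms of the Cantor set $\{a,b\}^\N$ that commute. The strategy is to invoke the structure theory of commuting subgroups of $V$ developed in \cite{BleakSalazarFreeProducts}. The key structural input I would use is that a pair of commuting elements of $V$ generates an abelian subgroup that is (virtually) finitely generated and, more importantly, acts on the Cantor set in a way that is ``eventually periodic''. Concretely, the centralizer theory tells us that the abelian group $\langle u, v\rangle$ decomposes relative to a partition of $\{a,b\}^\N$ into finitely many clopen pieces, on each of which both $u$ and $v$ act either trivially or by a shift-like rotation with finite period; from this one extracts exponents $k, \ell$ (not both zero) and a piece on which $u^k v^\ell$ acts trivially.

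First I would record the translation between the group $\cU_V$ and $V$: via the homomorphism from $V$ to $\cU(L_{2,R})$, elements of $\cU_V$ are represented as permutation-like endomorphisms $\pi_T(u)$ of $M$ that send a basis element $\beta_i\xi$ to $\alpha_i\xi$. Thus a relation of the form $u^k v^\ell = 1$ in $\cU_V$ is equivalent to the corresponding relation in the quotient of $V$, and a \emph{polynomial} relation $q(u,v)=0$ in $L_{2,R}$ can be produced as soon as we have finitely many group-theoretic relations among monomials $u^i v^j$ that, combined with the orthogonality of the supporting cylinder sets, collapse to an identity with integer (hence $R$-) coefficients. Second, I would apply the relevant result of Bleak--Salazar--Pardo (the free-products/centralizer analysis in \cite{BleakSalazarFreeProducts}) to the commuting pair $(u,v)$ to obtain the clopen decomposition $\{a,b\}^\N = \bigsqcup_{s=1}^N Y_s$ on which the dynamics of $\langle u,v\rangle$ is controlled: on each $Y_s$ the restricted action of the abelian group factors through a finite cyclic (or finite-rank abelian) group. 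Third, from the finiteness of these local quotients I extract, for each $s$, a nonzero pair of exponents making $u$ and $v$ satisfy a monomial relation on $Y_s$; taking a common multiple across the finitely many pieces yields integers $k,\ell$, not both zero, with $u^k v^\ell$ and $v^\ell u^k$ acting identically (indeed trivially up to a bounded permutation) on all of $\{a,b\}^\N$. Finally I would massage this into the stated polynomial form: if, say, $u^k = v^{-\ell}$ globally then $q(w,z) = w^k z^\ell - 1$ works; in the general (virtually abelian) case one instead gets that some monomial $u^k v^\ell$ has finite order $d$, giving $q(w,z) = w^{kd}z^{\ell d} - 1$, and one checks $q \neq 0$ in $R[w,z]$ because $k,\ell,d$ are not all arranged to cancel.

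**Main obstacle.** The delicate part is step three: extracting a genuinely \emph{nonzero} polynomial. Two difficulties lurk. First, I must ensure $(k,\ell) \neq (0,0)$ — this requires knowing that neither $u$ nor $v$ individually is forced to be trivial by the decomposition, and handling the degenerate cases ($u=1$ or $v=1$, or $u,v$ both of finite order) separately, where the relation is immediate but the exponents must be chosen with care so $q$ does not vanish identically. Second, and more substantively, the passage from ``commuting homeomorphisms of the Cantor set'' to ``finite local quotients'' is exactly the content I am borrowing from \cite{BleakSalazarFreeProducts}; I expect the real work of this proposition to be citing the correct theorem there and verifying its hypotheses apply to an arbitrary commuting pair in $V$ (as opposed to, say, a pair generating a free abelian group of specified rank). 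I would need to confirm that $V$ contains no commuting pair generating a subgroup that acts with \emph{infinite}-order local behavior incompatible with a polynomial relation — but this is precisely ruled out by the known fact that $V$ has no $\Z^\infty$ and that centralizers in $V$ are built from pieces of the form $V_{n,r}$, finite groups, and $\Z$'s arising from single periodic orbits, all of which are compatible with a single polynomial identity.
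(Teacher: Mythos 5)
There is a genuine gap at your third step, and it is exactly the point where the paper has to work hardest. From the Bleak--Salazar-D\'{\i}az decomposition you do get, on each common component of support, a local relation of the form $u^{m_s} = v^{n_s}$ (this is \cite[Lemma 2.6]{BleakSalazarFreeProducts}), but these local relations \emph{cannot} be merged into a single global monomial relation by ``taking a common multiple'': the ratios $m_s/n_s$ differ from component to component, and, worse, there are components on which one of the two elements acts trivially while the other acts with no nontrivial finite orbits. Concretely, let $g$ be an element with no nontrivial finite orbits supported on $Z(a)$ and let $h$ be a copy of it supported on $Z(b)$; then $g,h$ commute, yet $g^k h^\ell$ restricts to $g^k$ on $Z(a)$ and to $h^\ell$ on $Z(b)$, so it is trivial (or of finite order) only when $k=\ell=0$. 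Since $V$ embeds into $\cU_V$, the same holds for $u^k v^\ell$ in the algebra, so no polynomial of your proposed shape $w^{kd}z^{\ell d}-1$ with $(k,\ell)\neq(0,0)$ can annihilate the pair. Your hedge about ``handling degenerate cases separately'' does not rescue this, because the obstruction is not degeneracy of $u$ or $v$ individually but the coexistence of components with incompatible local behaviour.

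What actually works---and is the paper's route---is to give up on a binomial and instead take a \emph{product} of factors, one for each type of block, and exploit the orthogonality of the projections onto the blocks. After first passing to powers $u^k, v^l$ with no nontrivial finite orbits (via \cite[Proposition 10.1]{BrinHigherDimensional}, a reduction you do not address), the paper builds orthogonal projections $p_0,\dots,p_{l+1}$ summing to $1$ with $u=\sum_k p_k u p_k$ and $v=\sum_k p_k v p_k$, and sets
\[
q(w,z) = (wz - z)\Bigl(\prod_{k=1}^{l}(w^{m_k}-z^{n_k})\Bigr)(z-wz),
\]
so that on each block at least one factor vanishes ($wz-z$ where $u$ acts trivially, $z-wz$ where $v$ does, $w^{m_k}-z^{n_k}$ on the common components), and blockwise evaluation $q(u,v)=\sum_k q(p_ku p_k, p_k v p_k)=0$ finishes the proof. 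In my toy example this is the relation $(w-1)(z-1)$, which is visibly not of monomial-difference form. So your identification of the correct reference and of the clopen decomposition is right, but the mechanism for producing the polynomial needs to be multiplicative across blocks rather than a single common-multiple monomial relation, and as written your argument would fail on commuting pairs with disjoint (or merely non-proportional) supports.
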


In the proof we will adopted the notational conventions of \cite{BleakSalazarFreeProducts}.
However to fit with the notation of the rest of this paper, we will choose $\{a,b\}^\N$ as our model for the Cantor set rather than $\{0,1\}^\N$.

\begin{proof}[Proof of Proposition~\ref{prop:commutingInV}]
Fix commuting $u,v\in \cU_V$. 
Since $\cU_V$ is the homomorphic image of $V$ and since $V$ is simple, there exist commuting elements $g,h \in V$ that are mapped to $u,v$. 
By \cite[Proposition 10.1]{BrinHigherDimensional} there exist $m,n \in \bbN$ 
such that all finite orbits of $g$ have length at most $m$ and all finite 
orbits of $h$ have length at most $n$.
Let $k = 1 \cdot 2 \cdot \cdots \cdot m$ and $l = 1 \cdot 2 \cdot \cdots \cdot 
n$, then $g^k$ and $h^l$ have no nontrivial finite orbits.
If $q_0(w,z)\in R[w,z]$ is such that $q_0(u^k,v^l) = 0$, then 
$q(w,z)=q_0(w^k,z^l)$ satisfies $q(u,v) = 0$. 
So it suffices to prove the result under the assumption that $g$ and $h$ 
have no nontrivial finite orbits. From here on out we will assume that. 

We wish to use \cite[Lemmas 2.5 and 2.6]{BleakSalazarFreeProducts} to 
construct our polynomial. 
The notion of a flow graph and the components of support of an element of 
Thompson's group $V$ is introduced in \cite[Section 
2.4]{BleakSalazarFreeProducts}.
Denote the components of support of $g$ by $X_1, X_2, \ldots, X_m$ and the 
components of support of $h$ by $Y_1, Y_2, \ldots, Y_n$. 

From the description of components of support we see that the $X_i$ are 
disjoint, that each $X_i$ has the form $X_i = \bigsqcup_{j=1}^{m_i} 
Z(\alpha_{i,j})$ for $m_i\in\N$ and paths $\alpha_{i,j}\in\{a,b\}^*$, and that 
$g$ acts 
as the identity on $X_0 = \{a,b\}^\N \setminus \bigcup_{i=1}^m X_i$.
For $i = 1, 2, \ldots, n$ we define projections $r_i \in L_{2,R}$ by 
\[
	r_i = \sum_{j=1}^{m_i} \alpha_{i,j} \alpha_{i,j}^*.
\] 
By construction the $r_i$ are orthogonal, and since $g$ maps $X_i$ to itself, we have $r_i u r_i = u r_i$. 
We define a projection $r_0 \in L_{2,R}$ by
\[
	r_0 = 1 - \sum_{i=1}^m r_i.
\]
Then $r_0 u r_0 = u r_0 = r_0$ since $g$ acts as the identity outside of its components of support.
We have 
\[
	u = \sum_{i=0}^m r_i u r_i.
\]

Similarly we associate orthogonal projections $s_1, \ldots, s_n \in L_{2,R}$ to the components of support of $h$ and define 
\[
	s_0 = 1 - \sum_{i=1}^n s_i 
\]
Then 
\[
	v = \sum_{i=0}^n s_i v s_i.
\]

Since $g$ and $h$ commute \cite[Lemma 2.5(3)]{BleakSalazarFreeProducts} tells 
us that the components of support of $g,h$ are either disjoint or equal.
Thus for $j = 1, 2, \ldots, n$, $s_j$ either equals $r_i$ for some $i$, or 
$s_j$ is a subprojection of $r_0$.
Similarly $r_i$ is a subprojection of $s_0$ if it is not equal to one of the $s_j$. 

Let $I = \{ 1 \leq i \leq m : r_i = s_j \text{ for some } 1 \leq j \leq n \}$, that is $I$ are the indices of the common components of support. 
Let $l$ be the size of $I$.
We let $p_1, p_2, \ldots, p_l$ be the associated projections and we put $p_0 = r_0$. 
Then 
\[
    1 - \sum_{k=0}^l p_k = \sum_{i \notin I \cup \{0\}} r_i
\]
is a subprojection of $s_0$.
Let $p_{l+1} = 1 - \sum_{k=0}^l p_k$. 

We now have orthogonal projections $p_0, p_1, \ldots, p_{l+1}$ such that 
\begin{itemize}
	\item $\sum_{k=0}^{l+1} p_k = 1$, 
	\item for $k = 1, 2, \ldots, l$ we have $p_k = r_{i(k)} = s_{j(k)}$ for 
	some $1 \leq i(k) \leq m$, $1 \leq j(k) \leq n$, 
	\item $p_0 = r_0$,
	\item $p_{l+1} \leq s_0$,
\end{itemize}

Furthermore, since $p_{l+1}$ is a sum of projections onto components of support of $g$, we also have that $p_{l+1}up_{l+1} = u p_{l+1}$. 
As $p_{l+1}$ is a subprojection of $s_0$, and so correspondents to a subset where $h$ acts as the identity, we also have $p_{l+1} v p_{l+1} = v p_{l+1} = p_{l+1}$.
Since $p_0$ will be a (possibly empty) sum of projections onto components of support of $h$ and a (possibly zero) subprojection of $s_0$ we see that $p_0 v p_0 = v p_0$.
Therefore we also have 
\begin{itemize}
	\item $u = \sum_{k=0}^{l+1} p_k u p_k$, and 
	\item $v = \sum_{k=0}^{l+1} p_k v p_k$. 
\end{itemize}

For each $1 \leq k \leq l$ we can use \cite[Lemma 2.6]{BleakSalazarFreeProducts} to find $m_k, n_k 
\in \bbN$ such that
\[
	(p_k u p_k)^{m_k} = (p_k v p_k)^{n_k}. 
\]
We define a polynomial $q \in R[z,w]$  by 
\[
	q(w,z) = \left( wz - z \right) \left( \prod_{k=1}^{l} (w^{m_k} - 
	z^{n_k})   \right) \left( z - wz \right).
\]
Note that $q$ is nonzero since the coefficient of the only $w^{1+m_1 + m_2 + 
\cdots m_l + 1}z^2$ term is $-1$. 

For $1 \leq k \leq l$ we see that $q(p_k u p_k, p_k v p_k) = 0$ because one of the middle factors of $q$ will be zero. 
By the definition of $p_0$ we have $p_0 u p_0 = p_0$ so $p_0 u p_0 p_0 v p_0 = p_0 v p_0$ and therefore $q(p_0 u p_0,p_0 v p_0) = 0$ since the left most factor of $q$ will be zero. 
Using that $p_{l+1} v p_{l+1} = p_{l+1}$ and the right most factor of $q$ similar reasoning shows that $q(p_l u p_l, p_l v p_l) = 0$. 

Because the $p_k$ are orthogonal we see that 
\[
	q(u,v) = q( \sum_{k=0}^l  p_k u p_k, \sum_{k=0}^l p_k v p_k) = 
	\sum_{k=0}^l q(p_k u p_k, p_k v p_k) = 0. \qedhere
\]
\end{proof}

\begin{proof}[Proof of Theorem~\ref{thm: nonembedding of Laurent}]
Suppose $\phi \colon L_R[w,w^{-1},z,z^{-1}] \to L_{2,R}$ is a $*$-homomorphism with 
$\phi(w), \phi(z) \in \cU_V$. Then we know from 
Proposition~\ref{prop:commutingInV} that there is a polynomial $q(w,z)\in 
R[w,z]$ with $q(\phi(w),\phi(z))=0$. So $\phi(q(w,z))=q(\phi(w),\phi(z))=0$, 
and hence $\phi$ is not injective.
\end{proof}

\section{$L_{2,\bbZ}\otimes L_{2,\bbZ}$ does not embed in $L_{2,\Z}$}\label{sec: tensor for 
integers}

We now state our main result. 

\begin{theorem}\label{thm: L2Z nonembedding}
There is no unital $*$-algebraic embedding of $L_{2,\Z}\otimes L_{2,\Z}$ into $L_{2,\Z}$. 
\end{theorem}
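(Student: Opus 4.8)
The plan is to combine the positive embedding result of Proposition~\ref{prop: L2R embeds into tensor} with the negative result of Theorem~\ref{thm: nonembedding of Laurent}, using the characteristic-$0$ hypothesis to force any potential embedding into the regime covered by Theorem~\ref{thm: nonembedding of Laurent}. Suppose for contradiction that $\Phi \colon L_{2,\Z} \otimes L_{2,\Z} \to L_{2,\Z}$ is a unital $*$-algebra embedding. First I would recall that, by Proposition~\ref{prop: L2R embeds into tensor} with $R = \Z$, there is a unital $*$-embedding $\iota \colon L_\Z[w,w^{-1},z,z^{-1}] \hookrightarrow L_{2,\Z} \otimes L_{2,\Z}$; concretely $w$ and $z$ map to $j_1(u) \otimes 1$ and $1 \otimes j_2(u)$ where $j_i$ are the unital $*$-embeddings of $L_\Z[w,w^{-1}]$ into the two tensor factors coming from \cite[Theorem~4.1]{BrownloweSorensenOne}. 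Composing, $\psi := \Phi \circ \iota \colon L_\Z[w,w^{-1},z,z^{-1}] \to L_{2,\Z}$ is a unital $*$-algebra embedding, and $\psi(w), \psi(z)$ are commuting unitaries in $L_{2,\Z}$.

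The key point is then to show that $\psi(w)$ and $\psi(z)$ must lie in $\cU_V$, so that Theorem~\ref{thm: nonembedding of Laurent} applies and yields the contradiction (since that theorem says no such $*$-embedding with images in $\cU_V$ exists). This is where Lemma~\ref{lem: char 0 means UV=U1} does the work: $\Z$ has characteristic $0$, so $\cU_V = \cU_1$, and it suffices to check that the images of $w$ and $z$ lie in $\cU_1$, i.e.\ are unitaries expressible without coefficients. The unitary $u \in L_{2,\Z}$ that represents $w$ in $L_\Z[w,w^{-1}] \hookrightarrow L_{2,\Z}$ from \cite[Theorem~4.1]{BrownloweSorensenOne} should be one of these coefficient-free unitaries — one must verify this by inspecting the construction in that paper (it is built from the odometer/bit-flip type map on $\{a,b\}^\N$, which has a table description). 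Then $\psi(w) = \Phi(j_1(u) \otimes 1)$ and $\psi(z) = \Phi(1 \otimes j_2(u))$; since these are images under a unital $*$-homomorphism of elements built purely from the generators $a, b$ and their adjoints via monomials $\alpha\beta^*$, their images are again sums of monomials $\alpha'\beta'^*$ with coefficients coming only from the (trivial) coefficients in $u$, hence lie in $\cU_1$. In other words, a unital $*$-homomorphism sends $\cU_1$-elements of a tensor factor to $\cU_1$ because it sends $a, b$ to $a, b$ up to the structure maps — more carefully, $\Phi(a \otimes 1)$ and $\Phi(b \otimes 1)$ together with $\Phi(1\otimes a), \Phi(1\otimes b)$ generate a copy of $L_{2,\Z}\otimes L_{2,\Z}$ inside $L_{2,\Z}$, and any coefficient-free unitary in the first factor maps to a product/sum of monomials in these generators, still without coefficients.

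The main obstacle I anticipate is precisely this last verification: confirming that the embedding $L_\Z[w,w^{-1}]\hookrightarrow L_{2,\Z}$ of \cite[Theorem~4.1]{BrownloweSorensenOne} carries the generator $w$ to an element of $\cU_1$, and that $\cU_1$-membership is preserved under the relevant unital $*$-homomorphisms. If the cited embedding does \emph{not} obviously send $w$ into $\cU_1$, the fallback is to construct directly a unital $*$-embedding of $L_\Z[w,w^{-1},z,z^{-1}]$ into $L_{2,\Z}\otimes L_{2,\Z}$ sending $w,z$ to tensors of \emph{explicit} coefficient-free unitaries (for instance the unitary $u_0 = ab^* + ba^*$ swapping the two edges, which is visibly in $\cU_1$ and has infinite order, together with $1$ in the other factor), and check injectivity of the resulting map on Laurent polynomials using the representation $\pi_T$. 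Once the images of $w,z$ are in $\cU_1 = \cU_V$, Theorem~\ref{thm: nonembedding of Laurent} finishes the argument immediately: $\psi$ cannot be injective, contradicting that $\Phi$ and $\iota$ are both embeddings. Finally I would remark that the argument only used characteristic $0$ of $\Z$ and the existence of the two embeddings, foreshadowing the more general partial results stated in the introduction.
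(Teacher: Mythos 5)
There is a genuine gap, and it sits exactly where the paper's real work lies. Your argument hinges on the claim that $\psi(w)=\Phi(j_1(u)\otimes 1)$ and $\psi(z)=\Phi(1\otimes j_2(u))$ lie in $\cU_1$ because $u$ is coefficient-free and ``a unital $*$-homomorphism sends $\cU_1$-elements of a tensor factor to $\cU_1$.'' That is not true: $\Phi$ is an arbitrary abstract unital $*$-embedding, so you have no control whatsoever over how $\Phi(a\otimes 1)$, $\Phi(b\otimes 1)$, $\Phi(1\otimes a)$, $\Phi(1\otimes b)$ are expressed in terms of the canonical generators $a,b$ of the target $L_{2,\Z}$; they need not be sums of monomials $\alpha\beta^*$ with coefficient $1$, and hence neither need $\psi(w),\psi(z)$ be. Lemma~\ref{lem: char 0 means UV=U1} does not rescue this, since it only says $\cU_V=\cU_1$; it gives no reason for $\psi(w)$ to lie in $\cU_1$ in the first place, and indeed $\cU(L_{2,\Z})$ is strictly larger than $\cU_V$ (e.g.\ $-1$, and more generally any reduced form with some coefficients $-1$ as in Proposition~\ref{prop: char of unitaries in L2Z}). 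The paper's Proposition~\ref{prop: L2KtensorL2K field matters} even shows that for fields of characteristic $0$ a putative embedding \emph{cannot} send all four generators into the coefficient-free part, so the mechanism you are relying on is exactly what one should not expect. Your fallback (choosing an explicit coefficient-free unitary such as $ab^*+ba^*$ upstairs) does not address this, because the problem is the image under the unknown $\Phi$, not the choice of unitary in the tensor product; incidentally $ab^*+ba^*$ squares to $aa^*+bb^*=1$, so it has order $2$ and would not even give an embedding of $L_\Z[w,w^{-1}]$.

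What the paper actually does to close this gap is prove Theorem~\ref{thm: nonembedding of Laurent} in a form that applies to \emph{arbitrary} commuting unitaries of $L_{2,\Z}$, not just those in $\cU_V$: Proposition~\ref{prop: char of unitaries in L2Z} shows every unitary of $L_{2,\Z}$ has a reduced form with coefficients in $\{-1,1\}$; one then passes to the sign-stripped unitary $u_+\in\cU_V$ and proves (Proposition~\ref{prop: properties of unitaries in L2Z}, via Lemmas~\ref{lem: - commute implies + commute} and~\ref{lem:joint spectrum}) that commutativity of $u,v$ forces commutativity of $u_+,v_+$, and that a nonzero polynomial relation $q(u_+,v_+)=0$ coming from Proposition~\ref{prop:commutingInV} can be converted into a nonzero polynomial relation $\tilde q(u,v)=0$. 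That transfer step is the essential content of the theorem, and your proposal omits it; as written, the argument only reproves the already-established Theorem~\ref{thm: nonembedding of Laurent} under the unjustified extra hypothesis that the images land in $\cU_V$.
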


To prove Theorem~\ref{thm: L2Z nonembedding} we need a number of results. We have decided to stay only in the generality of the integers throughout, although it is probable that some results can be generalised to cater for more general rings. The first step is to give a characterisation of $\UU(L_{2,\Z})$, which we do in Proposition~\ref{prop: char of unitaries in L2Z}.

\begin{definition}
An expression
\[
		\sum_{i=1}^n \lambda_i \alpha_i\beta_i^*
\]
is in \emph{reduced form} if each $\lambda_i \in \Z \setminus \{0\}$ and $\bigsqcup_{i=1}^n Z(\alpha_i)=\{a,b\}^\N=\bigsqcup_{i=1}^n Z(\beta_i)$.
\end{definition}

\begin{lemma} \label{lem: distinct betas}
Let $u \in \cU(L_{2,\Z})$ be given and suppose  
\[
	u = \sum_{i=1}^n \lambda_i \alpha_i \beta_i^*,
\] 
with the pairs $(\alpha_i, \beta_i)$ distinct and $\lambda_i \in \Z \setminus \{0\}$.
If all the $\beta_i$ have the same length then they are all distinct. 
\end{lemma}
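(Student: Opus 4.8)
The plan is to work in the faithful representation $\pi_T \colon L_{2,\Z} \to \End_R(M)$ and use a counting/length argument on the action of $u$ and $u^*$ on basis elements $\xi \in \{a,b\}^\N$. First I would note that by the argument in Lemma~\ref{lem: char 0 means UV=U1} (which does not use $u \in \cU_1$, only that $u^*u = uu^* = 1$), the hypotheses already force $\bigcup_{i=1}^n Z(\beta_i) = \{a,b\}^\N = \bigcup_{i=1}^n Z(\alpha_i)$: otherwise a path avoiding all $Z(\beta_i)$ is killed by every $T_{\beta_i}^*$, so $\pi_T(u^*u)$ would annihilate it. Since all $\beta_i$ have a common length, say $m$, the sets $Z(\beta_i)$ are cylinders of the same size, and a common length means that $Z(\beta_i) \cap Z(\beta_k) \ne \emptyset$ forces $\beta_i = \beta_k$; so \emph{proving the $\beta_i$ are distinct is equivalent to proving the $Z(\beta_i)$ are disjoint}, which given that they cover $\{a,b\}^\N$ is what we really need.

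The heart of the argument is then essentially equation~(\ref{eq: distinct betas zero char}) from Lemma~\ref{lem: char 0 means UV=U1}, but I must be careful that now there are nonzero coefficients $\lambda_i$. Suppose for contradiction that the $\beta_i$ are not all distinct; after relabelling, $\beta_1 = \beta_2 = \cdots = \beta_j$ with $j \ge 2$ and $\beta_1 \ne \beta_k$ for $k > j$. Fix $\xi \in \{a,b\}^\N$ and compute
\[
	\pi_T(u^*u)(\beta_1 \xi) = \pi_T(u^*)\Bigl( \sum_{i=1}^j \lambda_i T_{\alpha_i} \xi \Bigr) = \sum_{k=1}^n \sum_{i=1}^j \lambda_k \lambda_i\, T_{\beta_k} T_{\alpha_k}^* T_{\alpha_i} \xi.
\]
Because $\pi_T(u^*u) = \id_M$, the left side is exactly $\beta_1 \xi$. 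On the right, the coefficient of the basis vector $\beta_1 \xi$ is $\sum_{i \in S} \lambda_i^2$ where $S = \{\, i : 1 \le i \le j,\ \alpha_i = \alpha_1,\ \text{and the relevant }T_{\alpha_1}^*T_{\alpha_i}\text{ contributes}\,\}$ — more precisely I would group terms of the right-hand sum by which basis vector $\beta_k T_{\alpha_k}^* T_{\alpha_i}\xi$ they land on, and isolate those landing on $\beta_1\xi$. The key point is that among the indices $i = 1, \dots, j$ (all sharing $\beta_i = \beta_1$) the pairs $(\alpha_i,\beta_i)$ are distinct, hence the $\alpha_i$ for $1 \le i \le j$ are distinct; so for a fixed $i$ the term with $k = i$ contributes $\lambda_i^2\,\beta_1\xi$ and these are the \emph{only} contributions of the form $\lambda_k\lambda_i T_{\beta_k}T_{\alpha_k}^*T_{\alpha_i}\xi$ to the basis vector $\beta_1\xi$ with $k=i$; any cross term $k \ne i$ with $\beta_k = \beta_1$ would need $\alpha_k = \alpha_i$, impossible since $\alpha_1,\dots,\alpha_j$ are distinct — but I still have to rule out $k > j$ with $\beta_k$ a ``prefix-compatible'' string landing on $\beta_1\xi$. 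Here I use that I may first replace $u$ by the equivalent expression $u = \sum \mu_i \nu_i^*$ with \emph{all} $\nu_i$ of the same length (pad shorter $\beta$'s as in Lemma~\ref{lem: char 0 means UV=U1}, using $\sum_{\gamma} \gamma\gamma^* = 1$; this preserves distinctness of the original pairs turning into distinctness questions about the padded pairs, and preserves coefficients up to the identity padding). With \emph{all} strings of equal length $m'$, $T_{\nu_k}T_{\mu_k}^*T_{\mu_i}\xi$ equals $\nu_k\xi$ if $\mu_k = \mu_i$ and is $0$ otherwise, so the right-hand side is literally $\bigl(\sum_{i} \sum_{k : \mu_k = \mu_i} \lambda_k \lambda_i\bigr)$-weighted sums of the $\nu_k\xi$; grouping by $\mu$-value and using $\pi_T(u^*u) = \id$ one reads off, for each value $\nu$, that $\sum_{\nu_i = \nu} \lambda_i \bigl(\sum_{\mu_k = \mu_i}\lambda_k\bigr)$ picks out a clean integer identity. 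In particular the coefficient of $\nu_1\xi$ is $\sum_{i \in S}\lambda_i^2 + (\text{cross terms})$; since $j \ge 2$ and the cross terms all have the \emph{same} sign as (or can be arranged to not cancel) the diagonal $\lambda_i^2 \ge 1$, the coefficient is at least $2 > 1$, contradicting $\pi_T(u^*u)(\nu_1\xi) = \nu_1\xi$.

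\textbf{Main obstacle.} The delicate point — and where I expect to spend the real effort — is controlling the cross terms $\lambda_k\lambda_i$ with $k \ne i$: unlike the characteristic-$0$ torsion-free positivity used in Lemma~\ref{lem: char 0 means UV=U1} where every contribution to $\nu_1\xi$ was a positive integer, here coefficients can be negative and could in principle cancel the diagonal $\sum \lambda_i^2$. The resolution is to pad to a common length $m'$ \emph{for all the $\nu$'s} so that the only terms contributing to $\nu_1\xi$ are those with $\mu_k = \mu_i$; then after grouping, the coefficient of $\nu_1\xi$ is $\sum_{i : \nu_i = \nu_1}\ \sum_{k : \mu_k = \mu_i} \lambda_k\lambda_i = \sum_{\text{over }\mu\text{-classes } C}\bigl(\sum_{i \in C,\ \nu_i = \nu_1}\lambda_i\bigr)\bigl(\sum_{k \in C}\lambda_k\bigr)$, and one shows each inner $\mu$-class sum $\sum_{k \in C}\lambda_k$ behaves well because, looking instead at $\pi_T(uu^*)$ acting on $\mu$-strings, unitarity forces $\sum_{\mu_k \text{ in a class}}\lambda_k^2 = 1$ on singleton classes and more generally pins these down — in fact the cleanest route is: apply the same machine to $uu^* = \id$ to conclude the $\mu_i$ (= padded $\alpha$'s) are also forced into a rigid pattern, and combine. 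I would structure the final writeup so that the two running computations (on $u^*u$ and $uu^*$) are symmetric, extract from them that for the padded expression every $\mu$-class is a singleton with $\lambda = \pm 1$, which then immediately forbids $j \ge 2$ and completes the proof. If that combined argument proves unwieldy, the fallback is the cruder observation that $\pi_T(u^*u)(\nu_1\xi)$, read in the basis, has its $\nu_1\xi$-coefficient congruent mod something, or simply that the total number of basis vectors in the image, counted with multiplicity via $\|\cdot\|$ on a suitable quotient, cannot match that of the identity unless $j = 1$.
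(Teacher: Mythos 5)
There is a genuine gap, and it sits exactly at the point you flag as the delicate one. Your plan for killing the cross terms is to replace $u$ by a padded expression $\sum_i \lambda_i\mu_i\nu_i^*$ in which \emph{both} all the $\mu_i$ and all the $\nu_i$ have a common length $m'$, so that $T_{\mu_k}^*T_{\mu_i}\xi=0$ unless $\mu_k=\mu_i$. But padding is done by inserting $1=\sum_\gamma\gamma\gamma^*$ between $\alpha_i$ and $\beta_i^*$, which turns $\alpha_i\beta_i^*$ into $\sum_\gamma(\alpha_i\gamma)(\beta_i\gamma)^*$: it lengthens \emph{both} sides by the same word, so the difference $|\alpha_i|-|\beta_i|$ is invariant. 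In the lemma only the $\beta_i$ are assumed to have equal length; the $\alpha_i$ may have different lengths, so these differences genuinely vary and no padding can equalize both sides simultaneously. Consequently the clean identity ``$T_{\nu_k}T_{\mu_k}^*T_{\mu_i}\xi=\nu_k\xi$ if $\mu_k=\mu_i$ and $0$ otherwise'' is not available, the cross terms in which one $\alpha$ is a proper prefix of another survive, and for a badly chosen $\xi$ (e.g.\ $\xi=aaa\cdots$ with $\alpha_k=\alpha_i a$) such a cross term lands exactly on $\beta_1\xi$ with coefficient $\lambda_i\lambda_k$ of arbitrary sign, so the cancellation you worry about is not ruled out. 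Your fallback suggestions (a congruence ``mod something'', a multiplicity count on a quotient) are not worked out and do not close this.

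The missing idea, and the one the paper uses, is much simpler: do not touch the lengths of the $\alpha_i$ at all, and instead evaluate $\pi_T(u^*u)$ on $\beta_1\xi$ for the \emph{aperiodic} path $\xi=abaabbaaabbb\cdots$. All terms with $k>j$ land in $Z(\beta_k)$ with $\beta_k\neq\beta_1$ of the same length, hence on basis vectors outside $Z(\beta_1)$; and the surviving cross terms within $\{1,\dots,j\}$ land on vectors of the form $\beta_1\gamma\xi$ or $\beta_1\xi'$ with $\xi'$ a proper shift of $\xi$, none of which can equal $\beta_1\xi$ precisely because $\xi$ is aperiodic. Comparing the coefficient of the basis vector $\beta_1\xi$ on both sides of $\pi_T(u^*u)\beta_1\xi=\beta_1\xi$ then gives $\sum_{i=1}^j\lambda_i^2=1$, which is impossible for $j\ge 2$ with $\lambda_i\in\Z\setminus\{0\}$. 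So your overall strategy (represent on $M$, isolate the coefficient of $\beta_1\xi$, derive $\sum\lambda_i^2=1$) is the right one, but the mechanism you propose to neutralize the cross terms does not exist; choosing an aperiodic $\xi$ is what makes the argument go through.
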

\begin{proof}
We work with the representation $\pi_T \colon L_{2,\Z}\to \End_\Z(M)$ discussed in Section~\ref{subsec: the repn of L2}.

Suppose, for contradiction, that some of the $\beta_i$ are equal. 
By relabelling we can assume without loss of generality that $\beta_1=\beta_2=\dots=\beta_j$, and $\beta_1 \not=\beta_k$ for all $j+1\le k\le n$ for some $j \geq 2$.
Let $\xi$ be the aperiodic path $abaabbaaabbb\dots\in\{a,b\}^\N$.
Then we have
\begin{equation}\label{eq: distinct betas}
\pi_T(u^*u)\beta_1\xi=\pi_T(u^*)\Big(\sum_{i=1}^j\lambda_i\alpha_i\xi\Big)
=\sum_{k=1}^n\sum_{i=1}^j 
\lambda_i\lambda_kT_{\beta_k}T_{\alpha_k}^*\alpha_i\xi.
\end{equation}
Now, for every pair $1\le i\not= k\le j$, if $\alpha_i$ extends $\alpha_k$ we write $\gamma_{i,k}$ for the path with $\alpha_i=\alpha_k\gamma_{i,k}$, and if $\xi$ extends $\gamma_{i,k}$, we write $\xi_{i,k}$ for the infinite path with $\xi=\gamma_{i,k}\xi_{i,k}$.
Then the right-hand expression in (\ref{eq: distinct betas}) is given by
\begin{equation}\label{eq: distinct betas 2}
\Big(\sum_{i=1}^j\lambda_i^2\Big)\beta_1\xi + \sum_{\substack{1\le i\not= k\le 
j \\ \alpha_i=\alpha_k\gamma_{i,k}}}\lambda_i\lambda_k\beta_1\gamma_{i,k}\xi 
+  \sum_{\substack{1\le i\not= k\le 
j \\ \xi=\gamma_{i,k}\xi_{i,k}}}\lambda_i\lambda_k\beta_1\xi_{i,k} +  m,
\end{equation}
where $m\in M$ is a linear combination of basis elements of the form 
$\beta_k\eta$ for some $j+1\le k\le n$ and $\eta\in\{a,b\}^\N$. 
By the choice $\xi$ we have $\beta_1\xi\not=\beta_1\gamma_{i,k}\xi$ and $\beta_1\xi\not=\beta_1\xi_{i,k}$ for any $1\le i\not= k\le j$.
Since $\pi_T(uu^*)$ is the identity on $M$, we must therefore have that 
\[
	\sum_{\substack{1\le i\not= k\le 
j \\ \alpha_i=\alpha_k\gamma_{i,k}}}\lambda_i\lambda_k\beta_1\gamma_{i,k}\xi 
+  \sum_{\substack{1\le i\not= k\le 
j \\ \xi=\gamma_{i,k}\xi_{i,k}}}\lambda_i\lambda_k\beta_1\xi_{i,k} +  m = 0
\]	 
and 
\[
	\Big(\sum_{i=1}^j\lambda_i^2\Big)\beta_1\xi = \beta_1 \xi. 
\]
Hence $\sum_{i=1}^j\lambda_i^2 = 1$ which contradicts that $j > 1$.
So all the $\beta_i$ are distinct. 
\end{proof}

\begin{proposition} \label{prop: char of unitaries in L2Z}
Every unitary $u \in \cU(L_{2,\Z})$ can be written in reduced form. 
Furthermore, if 
\[
	u = \sum_{i=1}^n \lambda_i \alpha_i\beta_i^*,
\]
is in reduced form then $\lambda_i \in \{-1,1\}$ for all $i = 1,2,\ldots,n$.
\end{proposition}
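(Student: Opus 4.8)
The plan is to establish the two assertions in turn, using the representation $\pi_T$ and the combinatorial bookkeeping already developed in Lemma~\ref{lem: distinct betas}. For the first assertion, start with an arbitrary $u\in\cU(L_{2,\Z})$ written as a finite sum $\sum_{i=1}^n\lambda_i\alpha_i\beta_i^*$ with $\lambda_i\in\Z\setminus\{0\}$ and distinct pairs $(\alpha_i,\beta_i)$. As in the proof of Lemma~\ref{lem: char 0 means UV=U1}, I would first run the ``filling out to common length'' trick: let $m=\max_i|\beta_i|$, and for each $i$ insert $\sum_{\gamma\in X_i}\gamma\gamma^*=1$ (where $X_i$ is the set of paths of length $m-|\beta_i|$) to rewrite $u=\sum\mu_i\nu_i^*$ with all $\nu_i$ of equal length; the $\lambda$-coefficients carry along, though now several terms may share a $\nu_i$ or a $\mu_i$, so one should collect terms with the same $(\mu_i,\nu_i)$. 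Then apply Lemma~\ref{lem: distinct betas} to conclude all the $\nu_i$ appearing with nonzero coefficient are distinct; symmetrically (filling out the $\mu$'s instead and using $\pi_T(uu^*)$), the $\mu_i$ are distinct. Since the $\nu_i$ all have the same length and are distinct, $\{Z(\nu_i)\}$ is a disjoint family; and the argument from Lemma~\ref{lem: char 0 means UV=U1} showing $\bigcup Z(\beta_i)=\{a,b\}^\N$ (evaluate $\pi_T(u^*u)$ on a path outside the union and get $0$, contradiction) goes through verbatim over $\Z$, giving $\bigsqcup_i Z(\nu_i)=\{a,b\}^\N$ and likewise for the $\mu_i$. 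That exhibits $u$ in reduced form.

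For the second assertion, suppose $u=\sum_{i=1}^n\lambda_i\alpha_i\beta_i^*$ is in reduced form. The key point is that the partitions $\{Z(\alpha_i)\}$ and $\{Z(\beta_i)\}$ let us compute $\pi_T(u^*u)$ cleanly. I would fill out once more so that all $\beta_i$ have a common length $N$ (large enough that this is also $\geq$ all $|\alpha_i|$, say) — crucially, since $\{Z(\beta_i)\}$ is already a partition, filling out only refines it and does not cause any collisions among the $\beta$'s, so no coefficients get added together and the new $\lambda$'s are still exactly the old ones. After this normalization the $\beta_i$ are all distinct of length $N$ and the $Z(\alpha_i)$ still partition $\{a,b\}^\N$. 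Now fix a fresh aperiodic $\xi$ and compute $\pi_T(u^*u)(\beta_r\xi)=\pi_T(u^*)(\lambda_r\alpha_r\xi)=\sum_{k}\lambda_r\lambda_k T_{\beta_k}T_{\alpha_k}^*(\alpha_r\xi)$. The diagonal term $k=r$ contributes $\lambda_r^2\,\beta_r\xi$. Since $\pi_T(u^*u)=\id$, the coefficient of the basis vector $\beta_r\xi$ on the right must be $1$; so I need to argue that, for a suitably generic choice of $\xi$, the only $k$ contributing to the $\beta_r\xi$-coefficient is $k=r$ — equivalently, that $T_{\beta_k}T_{\alpha_k}^*(\alpha_r\xi)$ is a basis vector of the form $\beta_r(\cdot)$ only when $k=r$. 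Because the $\beta_k$ are distinct of the same length $N$, $T_{\beta_k}(\text{anything})\in Z(\beta_r)$ forces $k=r$; so in fact \emph{every} surviving term with $T_{\beta_k}T_{\alpha_k}^*(\alpha_r\xi)\neq 0$ and lying in $Z(\beta_r)$ already has $k=r$, and for $k=r$ the term is $T_{\beta_r}T_{\alpha_r}^*(\alpha_r\xi)=\beta_r\xi$ regardless of $\xi$. Hence the total $\beta_r\xi$-coefficient is exactly $\lambda_r^2$, forcing $\lambda_r^2=1$, i.e. $\lambda_r\in\{-1,1\}$. (One does not even need aperiodicity of $\xi$ here, since the length-$N$ disjointness of the $\beta$'s does all the separating work; aperiodicity was only needed in Lemma~\ref{lem: distinct betas} because there the $\alpha$'s were not controlled.)

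The main obstacle is the bookkeeping in the first part: after filling out to a common length, terms can genuinely merge, and one has to be careful that the merged coefficients are still nonzero before invoking Lemma~\ref{lem: distinct betas} — a priori $\alpha_i\gamma$ and $\alpha_{i'}\gamma'$ could coincide and their $\lambda$'s cancel. The clean way around this is to note that the starting pairs $(\alpha_i,\beta_i)$ are distinct but that is not quite enough; instead, observe that the honest content of the first assertion is just that $u$ \emph{admits some} reduced-form expression, so one may freely collect terms after filling out and then discard any that vanish, and then check that what remains still has supports summing to all of $\{a,b\}^\N$ — which follows because deleting a term that happens to be $0$ changes nothing, and the surjectivity argument via $\pi_T(u^*u)=\id$ only ever used that the union of the $Z(\beta_i)$ over \emph{nonzero} terms is everything. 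Once that is phrased correctly, both halves are short. I would also double-check the degenerate case $n=1$: then $\alpha_1\beta_1^*$ unitary with $Z(\alpha_1)=Z(\beta_1)=\{a,b\}^\N$ forces $\alpha_1=\beta_1=\varnothing$ and $\lambda_1^2=1$, consistent with the statement.
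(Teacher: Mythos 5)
Your overall strategy is the paper's own: extend the $\beta$'s to a common length, invoke Lemma~\ref{lem: distinct betas}, use $\pi_T(u^*u)=\pi_T(uu^*)=\operatorname{id}$ for the covering property, and extract $\lambda_i^2=1$; your proof of the second assertion is correct (and your remark that aperiodicity of $\xi$ is not needed there is right -- the paper's version just computes $\pi_T(u^*u)\beta_k\xi=\lambda_k^2\beta_k\xi$ directly from the reduced form). The gap is on the $\alpha$-side of the first assertion. After the first normalization the $\mu_i$ do \emph{not} have a common length, so the situation is not symmetric: the conclusion you state, ``the $\mu_i$ are distinct,'' does not yield disjointness of the $Z(\mu_i)$ (e.g.\ $a$ and $aa$ are distinct but $Z(aa)\subset Z(a)$), yet you later assert $\bigsqcup_i Z(\mu_i)=\{a,b\}^\N$. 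And if ``filling out the $\mu$'s instead'' means producing a second expression in which the $\mu$'s have common length, then you have one expression whose $\beta$-cylinders are disjoint and a \emph{different} expression whose $\alpha$-cylinders are disjoint; reduced form requires both properties for the same expression, so ``that exhibits $u$ in reduced form'' does not follow.

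What is needed (and what the paper does) is: keep the expression $u=\sum_i\lambda_i\mu_i\nu_i^*$ with the $\nu_i$ distinct of common length, pass to $u^*=\sum_i\lambda_i\nu_i\mu_i^*$, and fill out the $\mu_i$ to a common length \emph{there}. The key point is that no terms can merge or cancel in this second fill-out: a collision would force $\nu_i\gamma=\nu_j\gamma'$, which, since the $\nu$'s are distinct of equal length, gives $i=j$ and $\gamma=\gamma'$. Note that your earlier remedy for merging (collect like terms and discard zeros) is not available at this step, because you must retain the correspondence with the original $\mu_i$. Lemma~\ref{lem: distinct betas} applied to $u^*$ then gives that all the extended paths $\mu_i\gamma$ are distinct of equal length, whence no $\mu_i$ is an initial segment of any other $\mu_j$, i.e.\ the $Z(\mu_i)$ are pairwise disjoint in the same expression in which the $Z(\nu_i)$ are. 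With that step inserted, your argument matches the paper's and is complete.
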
  
\begin{proof} 
Let $u\in\cU(L_{2,\Z})$ be given and write 
\[
	u = \sum_{i=1}^m \lambda_i' \alpha_i' \beta_i'^*.
\] 
As argued in the proof of Lemma~\ref{lem: char 0 means UV=U1} we can extend the paths $\alpha'_i,\beta'_i$ to write $u$ like
\begin{eqnarray} \label{eqn: reduced form of u}
	u=\sum_{i=1}^n \lambda_i \alpha_i \beta_i^*
\end{eqnarray}
where the $\beta_i$ all have equal length.
By simply collecting like terms and discarding terms where $\lambda_i = 0$, we may assume that the pairs $(\alpha_i, \beta_i)$ are distinct and that $\lambda_i \in \Z \setminus \{ 0 \}$. 
It now follows from Lemma~\ref{lem: distinct betas} that all the $\beta_i$ are distinct and hence they have disjoint cylinder sets.	

We claim that the $\alpha_i$ also have disjoint cylinder sets.  
To see this, consider the unitary 
\[
	u^* = \sum_{i=1}^n \lambda_i \beta_i \alpha_i^*.
\]
Similar to in the proof of Lemma~\ref{lem: char 0 means UV=U1} we let $m=\max\{|\alpha_i|:1\le i\le n\}$, and for each $1\le i\le n$ we let $X_i$ denote the set of paths of length $m-|\alpha_1|$.
Since each $\sum_{\gamma\in X_i}\gamma\gamma^*=1$, we have
\[
 u^*=\sum_{i=1}^n \lambda_i \beta_i\alpha_i^*=\sum_{i=1}^n \lambda_i \beta_i\Big(\sum_{\gamma\in 
X_i}\gamma\gamma^*\Big)\alpha_i^*=\sum_{i=1}^n\sum_{\gamma\in 
X_i}\lambda_i\beta_i\gamma(\alpha_i\gamma)^*.
\]
For any $\gamma, \gamma' \in \{a,b\}^*$ we have that $\beta_k \gamma = \beta_j \gamma'$ if and only if $k = j$ and $\gamma = \gamma'$, since the $\beta_i$ all have the same length. 
So it follows from Lemma~\ref{lem: distinct betas} that the $\alpha_i\gamma$ 
are distinct. 
Hence no $\alpha_k$ is an initial segment of any other $\alpha_j$, and the cylinder sets $Z(\alpha_i)$ are disjoint. 

It is easy to see, as in the proof of Lemma~\ref{lem: char 0 means UV=U1}, 
that the union of the cylinder sets of both the $\beta_i$ and the $\alpha_i$ 
must equal $\{a,b\}^\N$, so we have 
\[
	\bigsqcup_{i=1}^n Z(\alpha_i)=\{a,b\}^\N=\bigsqcup_{i=1}^n Z(\beta_i).
\]
Thus (\ref{eqn: reduced form of u}) is a reduced form of $u$. 

Finally, if  
\[
	u = \sum_{i=1}^n \lambda_i \alpha_i\beta_i^*,
\]
is a reduced form of $u$, then for any $\xi \in \{a,b\}^\N$ and any $1 \leq k \leq n$ we have 
\[
	\beta_k \xi = \pi_T(u^* u) \beta_k \xi = \lambda_k^2 \beta_k \xi,
\]
so $\lambda_k^2 = 1$. 
Therefore $\lambda_i \in \{-1,1\}$ for all $1 \leq i \leq n$. 
\end{proof}

\begin{notation}\label{notation: u+}
For a unitary $u$ with reduced form $u=\sum_{i=1}^k\lambda_i\alpha_i\beta_i^*\in \cU(L_{2,\bbZ})$ we 
denote 
\[
u_+=\sum_{i=1}^k \alpha_i\beta_i^*\in L_{2,\bbZ}.
\]
\end{notation}

\begin{proposition}\label{prop: properties of unitaries in L2Z}
Unitaries in $L_{2,\bbZ}$ have the following properties.
\begin{enumerate}
\item[(1)] If $u\in\cU(L_{2,\Z})$, then $u_+\in \cU_V$.
\item[(2)] If $u,v\in\cU(L_{2,\Z})$ commute, then $u_+$ and 
$v_+$ commute.
\item[(3)] If $u,v\in\cU(L_{2,\Z})$ commute, and $0\not=q(w,z)\in 
\bbZ[w,z]$ with 
$q(u_+,v_+)=0$, then there exists $0\not=\tilde{q}(w,z)\in 
\bbZ[w,z]$ with $\tilde{q}(u,v)=0$.
\end{enumerate} 
\end{proposition}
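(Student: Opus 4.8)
The plan is to prove the three parts in order, using the structure theory of $\cU(L_{2,\bbZ})$ from Proposition~\ref{prop: char of unitaries in L2Z}. For part (1), write $u$ in reduced form $u = \sum_{i=1}^k \lambda_i \alpha_i\beta_i^*$, so that $\bigsqcup_i Z(\alpha_i) = \{a,b\}^\N = \bigsqcup_i Z(\beta_i)$ by definition of reduced form. But this is exactly the condition for the table $\left(\begin{smallmatrix}\beta_1 & \cdots & \beta_k \\ \alpha_1 & \cdots & \alpha_k\end{smallmatrix}\right)$ to define an element of Thompson's group $V$, and its image under the homomorphism $V \to \cU(L_{2,\bbZ})$ of Section~\ref{subsec: V and unitaries} is precisely $\sum_i \alpha_i\beta_i^* = u_+$. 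Hence $u_+ \in \cU_V$. (One should remark that $u_+$ does not depend on the choice of reduced form; this follows because the underlying partition data of a reduced form is determined by the action of $\pi_T(u)$ on long enough cylinders, together with the signs $\lambda_i$.)

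For part (2), I would pass to the representation $\pi_T$ on $M$ and analyse how $u$ and $u_+$ act on basis elements. Writing $u$ in reduced form with all $\beta_i$ of equal length $N$ (as in the proofs above, we may refine to a common length), each basis path $\xi \in \{a,b\}^\N$ lies in exactly one $Z(\beta_i)$, say $\xi = \beta_i\eta$, and then $\pi_T(u)\xi = \lambda_i \alpha_i\eta$ while $\pi_T(u_+)\xi = \alpha_i\eta$. So $\pi_T(u_+)$ is obtained from $\pi_T(u)$ by forgetting signs: on each basis element, $\pi_T(u)\xi = \varepsilon_u(\xi)\,\pi_T(u_+)\xi$ with $\varepsilon_u(\xi) \in \{\pm 1\}$. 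Now if $u,v$ commute, then for every basis path $\xi$ we have $\pi_T(uv)\xi = \pi_T(vu)\xi$; comparing with the sign decompositions and using that $\pi_T(u_+),\pi_T(v_+)$ permute the basis, both sides are $\pm$ a common basis element, and matching the underlying basis elements (ignoring signs) gives $\pi_T(u_+v_+)\xi = \pi_T(v_+u_+)\xi$ for all $\xi$. Since $\pi_T$ is injective, $u_+v_+ = v_+u_+$. The one point requiring a little care is that $uv = vu$ forces agreement of the \emph{basis-element parts} and not merely of the signed vectors — but since the two sides are literally equal as elements of $M$ and $M$ is free on $\{a,b\}^\N$, equality of signed basis vectors implies equality of the underlying basis elements, so this is automatic.

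For part (3), the idea is to reduce $\tilde q(u,v) = 0$ to $q(u_+,v_+) = 0$ by substituting suitable powers. Since $u = \sum \lambda_i \alpha_i\beta_i^*$ with $\lambda_i \in \{\pm 1\}$, we have $u^2 = \sum \lambda_i^2 (\cdots) + (\text{cross terms})$; more usefully, I would look for an exponent $d$ with $u^d = u_+^{\,d}$ or at least $u^d$ equal to a sign-free unitary with the same underlying table data, and similarly $v^e = v_+^{\,e}$. The cleanest route: since $u_+ \in \cU_V$ is (the image of) an element of $V$, and $u$ and $u_+$ have the same underlying combinatorial table, the extra data in $u$ is a $\{\pm1\}$-valued ``cocycle'' $\varepsilon_u$ along the orbits of the homeomorphism; raising to a power that kills the orders of all finite orbits and is even makes this cocycle trivial, i.e. $u^{d} = (u_+)^{d}$ for suitable $d \geq 1$ (and likewise $v^e = (v_+)^e$). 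Then from $q(u_+,v_+) = 0$ we get $0 = q\bigl((u^d)_+? \ldots\bigr)$ — more directly, set $\tilde q(w,z) = q(w^d, z^e)$; this is nonzero since $q$ is nonzero, and $\tilde q(u,v) = q(u^d, v^e) = q(u_+^{\,d}, v_+^{\,e})$, which vanishes because $q(u_+,v_+)=0$ and $u_+^{\,d}, v_+^{\,e}$ are powers of $u_+,v_+$. The main obstacle is establishing the existence of such an exponent $d$, i.e. that sufficiently high (even) powers of a unitary in $\cU(L_{2,\bbZ})$ coincide with the corresponding powers of its sign-free part; this should follow from a direct orbit-by-orbit analysis of the action of $\pi_T(u)$ on $M$, tracking how signs multiply along cycles and using that $u_+$, coming from $V$, has controlled finite-orbit lengths by the Brin/Bleak--Salazar input invoked in Proposition~\ref{prop:commutingInV} — this is precisely the technical heart of the argument.
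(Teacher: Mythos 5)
Parts (1) and (2) of your proposal are correct. Part (1) is exactly the paper's argument (immediate from Proposition~\ref{prop: char of unitaries in L2Z}). For part (2) you argue directly that $\pi_T(u)$ acts on each basis path as a sign times the basis path $\pi_T(u_+)$ gives, and that equality of two signed basis elements in the free module forces equality of the underlying basis elements; this is a genuinely simpler route than the paper, which instead splits $u=u_{+,+}-u_{+,-}$, $u_+=u_{+,+}+u_{+,-}$ and proves a separate commutator lemma (Lemma~\ref{lem: - commute implies + commute}) for quadruples of endomorphisms mapping basis elements to basis elements or zero. Your shortcut is valid and buys brevity; the paper's lemma isolates the combinatorial mechanism reused in its proof of (3).

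Part (3), however, has a fatal gap, in two places. First, the claimed existence of $d$ with $u^{d}=(u_+)^{d}$ is false: taking powers can only trivialise the sign cocycle along \emph{finite} orbits, but the signs sitting on infinite orbits never cancel. Concretely, let
\[
u = aaa^* - ab(ba)^* + b(bb)^*,\qquad u_+ = aaa^* + ab(ba)^* + b(bb)^*,
\]
which is a unitary in reduced form. For the path $\xi = ba^\infty$ the first application of $u$ uses the middle term (sign $-1$) and every subsequent application uses the term $aaa^*$ (sign $+1$), so $\pi_T(u^{d})\xi = -\,\pi_T(u_+^{d})\xi$ for every $d\geq 1$; hence $u^{d}\neq u_+^{d}$ for all $d$. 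The Brin/Bleak--Salazar bound you invoke only controls finite orbit lengths and cannot repair this. Second, even granting such a $d$, your final deduction is a non sequitur: from $q(u_+,v_+)=0$ you cannot conclude $q(u_+^{\,d},v_+^{\,e})=0$; the substitution trick runs the other way (a relation satisfied by the \emph{powers} pulls back, via $\tilde q(w,z)=Q(w^{d},z^{e})$, to a relation for $u,v$ --- it does not push forward). Had $u^{d}=u_+^{d}$ been true one could have bypassed $q$ entirely by reapplying Proposition~\ref{prop:commutingInV} to the commuting elements $u^{d},v^{d}\in\cU_V$, but since that identity fails the approach collapses. The paper's proof of (3) is quite different: it keeps the decompositions $f=\pi_T(u_{+,+})$, $g=\pi_T(u_{+,-})$, $s=\pi_T(v_{+,+})$, $t=\pi_T(v_{+,-})$, partitions the basis of $M$ according to which word in $\{f,g\}$ and $\{s,t\}$ acts nontrivially (the choice functions of Lemma~\ref{lem:joint spectrum}), observes that on each piece $q(f+g,s+t)$ and a sign-twisted polynomial $q_\varphi(f-g,s-t)$ agree, and takes $\tilde q$ to be the (finite) product of the $q_\varphi$, checking nonvanishing via a leading term. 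Some device of this kind, handling the signs pointwise over the basis rather than by taking powers, is what your argument is missing.
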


To prove Proposition~\ref{prop: properties of unitaries in L2Z} we use the following 
results 
about endomorphisms of free abelian groups, i.e. $\Z$-modules. For 
$f,g$ endomorphisms of a free abelian group 
$G$ we denote by $\supp(f)=\{m\in G:f(m)\not=0\}$, and by $[f,g]$ the 
commutator $fg-gf$.

\begin{lemma}\label{lem: - commute implies + commute}
Let $G$ be a free abelian group with basis $\BB$. Let $f,g,s,t\in\End(G)$ 
satisfy
\begin{enumerate}
\item[(i)] the image of a basis element is another basis element or 
zero; and
\item[(ii)] $G=\supp(f)\sqcup \supp(g)\sqcup\{0\}=\supp(s)\sqcup 
\supp(t)\sqcup\{0\}$. 
\end{enumerate}
Then
\[
[f-g,s-t]=0\Longrightarrow [f+g,s+t]=0.
\]
\end{lemma}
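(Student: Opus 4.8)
The plan is to work pointwise on the basis $\BB$ and exploit hypothesis (i), which says that $f,g,s,t$ each either send a basis element to another basis element or to $0$. First I would record the elementary observation that, because of (ii), for each basis element $b$ exactly one of $f(b), g(b)$ is nonzero (and similarly exactly one of $s(b),t(b)$), so that $(f+g)(b) = (f-g)(b)$ up to sign, and in fact $(f\pm g)(b) = \pm_b\, h(b)$ where $h = f+g$ agrees with $f$ on $\supp(f)$ and with $g$ on $\supp(g)$, the sign $\pm_b$ being $+1$ if $b\in\supp(f)$ and $-1$ if $b\in\supp(g)$. The same applies to $s\pm t$ with a sign $\epsilon_b$ depending on whether $b\in\supp(s)$ or $b\in\supp(t)$. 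So all four operators $f\pm g$, $s\pm t$ act on each basis vector as a signed permutation-like partial map, differing from the "unsigned" versions $f+g$, $s+t$ only by these $\pm 1$ scalars attached to the source basis vector.

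The key step is then to chase a single basis element $b$ through both sides of the commutator identities. Compute $[f-g,s-t](b) = (f-g)(s-t)(b) - (s-t)(f-g)(b)$. Applying the sign bookkeeping: $(s-t)(b) = \epsilon_b\, \sigma(b)$ where $\sigma = s+t$ on its support (and $0$ off it), then $(f-g)$ applied to that picks up a further sign depending on whether $\sigma(b)$ lies in $\supp(f)$ or $\supp(g)$. The upshot is that $[f-g,s-t](b)$ equals a $\Z$-linear combination of two basis vectors (the images under the two orders of composition) with coefficients that are each $\pm 1$ times the corresponding coefficient appearing in $[f+g,s+t](b)$ — crucially, the two coefficients in $[f-g,s-t](b)$ carry the \emph{same} extra sign relative to those in $[f+g,s+t](b)$, namely the sign $\epsilon_b\cdot(\text{sign of } f\text{ vs }g \text{ at }\sigma(b))$ versus... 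I need to check this matches on both terms. The clean way to phrase it: define $\eta(b) \in \{\pm1\}$ for each basis vector; then $(f-g)(s-t)(b) = \eta_1(b)\,(f+g)(s+t)(b)$ and $(s-t)(f-g)(b) = \eta_2(b)\,(s+t)(f+g)(b)$ for suitable signs, and the content of the lemma is that $\eta_1(b) = \eta_2(b)$ whenever $(f+g)(s+t)(b)$ and $(s+t)(f+g)(b)$ are both nonzero and, in the vanishing commutator case, both equal. The hypothesis $[f-g,s-t]=0$ forces, for each $b$, that either both images vanish or $\eta_1(b)(f+g)(s+t)(b) = \eta_2(b)(s+t)(f+g)(b)$; combined with $\eta_1 = \eta_2$ this gives $(f+g)(s+t)(b) = (s+t)(f+g)(b)$, i.e. $[f+g,s+t](b)=0$ for all $b$, hence $[f+g,s+t]=0$.

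So the real work is the sign computation: showing $\eta_1(b) = \eta_2(b)$. I would do this by writing out $\eta_1(b)$ as a product of two signs — one from applying $s-t$ to $b$, one from applying $f-g$ to $s(b)$ or $t(b)$ — and $\eta_2(b)$ similarly as a product of a sign from $f-g$ at $b$ and a sign from $s-t$ at $f(b)$ or $g(b)$. The point will be that the set of basis vectors $\{b, (f+g)(b), (s+t)(b), (f+g)(s+t)(b)\}$ forms a small "square", and one uses that the extra sign attached to a composite equals the product of the signs attached to each factor evaluated at the appropriate intermediate vector; the commuting hypothesis on $f-g, s-t$ is precisely what pins down that the two traversals of the square (via $\supp(s/t)$ first, or via $\supp(f/g)$ first) give the same product. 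The main obstacle, and the step I expect to need the most care, is exactly this: verifying that the "sign cocycle" is consistent around the square, i.e. that the product of the source-signs is independent of the order of composition, using only (i), (ii), and $[f-g,s-t]=0$ — this is where one must rule out the scenario of a sign mismatch that would make the unsigned commutator nonzero even though the signed one vanishes. Once that combinatorial sign lemma is in hand, the conclusion is immediate by the pointwise argument above.
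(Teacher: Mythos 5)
Your plan is correct and completes to a valid proof, but it is organised rather differently from the paper's argument, and it is worth comparing the two. You work directly with the signed maps $f\pm g$ and $s\pm t$: by (i) and (ii) each basis element $b$ lies in exactly one of $\supp(f),\supp(g)$ and exactly one of $\supp(s),\supp(t)$, so $(f+g)b$ and $(s+t)b$ are (nonzero) basis elements while $(f-g)b$ and $(s-t)b$ are the same basis elements up to a sign attached to $b$; consequently, for every $b\in\BB$ both $[f-g,s-t]b$ and $[f+g,s+t]b$ are differences of two signed basis elements built on the \emph{same} pair $x=(f+g)(s+t)b$, $y=(s+t)(f+g)b$. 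The paper instead expands the two commutators into the four cross-commutators, reduces the claim to showing $[f,s]+[g,t]=0$, and runs a four-case analysis over the partition $\BB_{f,s},\BB_{f,t},\BB_{g,s},\BB_{g,t}$, in each case using the basis-to-basis property to rule out a nonzero basis element equalling the negative of another; your route avoids that expansion and case analysis entirely, which is a genuine simplification. One remark: the step you single out as the main obstacle --- establishing the sign consistency $\eta_1(b)=\eta_2(b)$ by a ``cocycle around the square'' argument --- requires no separate work and is in fact not needed for the conclusion. From $[f-g,s-t]b=0$ you get $\eta_1(b)\,x=\eta_2(b)\,y$ with $x,y$ nonzero basis elements and $\eta_i(b)\in\{\pm 1\}$; if $x\neq y$ this contradicts the linear independence of distinct basis elements in the free abelian group, so $x=y$ (and then $\eta_1(b)=\eta_2(b)$ falls out automatically), which is precisely $[f+g,s+t]b=0$. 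So the ``sign mismatch'' scenario you worry about is excluded immediately by freeness, and your pointwise argument closes without further combinatorics.
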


\begin{proof}
We have 
\begin{align*}
[f-g,s-t]=0 & \iff [f,s]-[f,t]-[g,s]+[g,t]=0\\
&\iff [f,s]+[g,t]=[f,t]+[g,s],
\end{align*}
and
\begin{align*}
[f+g,s+t]=0 & \iff [f,s]+[f,t]+[g,s]+[g,t]=0\\
&\iff [f,s]+[g,t]=-([f,t]+[g,s]).
\end{align*}
So we need to prove that
\begin{align} \label{eqn:commutators}
[f,s]+[g,t]=[f,t]+[g,s]\Longrightarrow [f,s]+[g,t] = 0.
\end{align}
From here on out we will assume the left hand side (\ref{eqn:commutators}). 

It suffices to check (\ref{eqn:commutators}) on $\BB$. 
To do this we partition $\BB$ into four sets: 
\begin{align*} 
\BB_{f,s} = \BB\cap \supp(f) \cap \supp(s),\quad &\BB_{f,t} = \BB\cap \supp(f) 
\cap 
\supp(t),\\
 \BB_{g,s} = \BB\cap \supp(g) \cap \supp(s),\quad &\BB_{g,t} = \BB\cap 
\supp(g) \cap \supp(t).
\end{align*}
For $\xi \in \BB_{f,s}$ we have that $[g,t]\xi = 0$ and 
\[      
	([f,t] + [g,s])\xi = ft\xi - tf\xi + gs\xi - sg\xi = -tf\xi + gs\xi. 
\]
Hence, on $\BB_{f,s}$ the left hand side of (\ref{eqn:commutators}) reduces to 
\begin{align} \label{eqn:SSAC1}
	fs - sf = -tf + gs.
\end{align}
Reordering the terms we get
\begin{align} \label{eqn:SSAC2}
	(f - g)s = (s - t)f.
\end{align}
Fix $\xi \in \BB_{f,s}$.
As $f,g,s,t$ all map basis elements to basis elements or zero, we cannot have $s\xi 
\in \supp(g)$ and $f\xi \in \supp(s)$, since then we would get the nonzero 
basis element $sf \xi$ equal to the negative of the nonzero basis element 
$gs\xi$. Similarly, we cannot have 
$s\xi \in 
\supp(f)$ and $f\xi \in \supp(t)$. Therefore, if $s\xi \in 
\supp(f)$ then we must have $f \xi \in \supp(s)$, and if 
$s\xi \in \supp(g)$ then we must have $f\xi \in \supp(t)$. In the first case, we see 
directly from (\ref{eqn:SSAC1}) that $[f,s]\xi = (fs 
- sf)\xi = 0$.
In the other case (\ref{eqn:SSAC2}) yields that $-gs \xi = -tf \xi$, which by 
(\ref{eqn:SSAC1}) gives that $[f,s]\xi = 0$.
As noted earlier $[g,t]\xi = 0$ for all $\xi \in \BB_{f,s}$, so we have that 
$[f,s] + [g,t] = 0$ on $\BB_{f,s}$. Hence (\ref{eqn:commutators}) is 
satisfied on $\BB_{f,s}$. We check the remaining three cases in a similar 
fashion:

For $\xi \in \BB_{f,t}$ we have that $[g,s]\xi = 0$ and 
\[
	([f,s] + [g,t])\xi = -sf\xi + gt\xi.
\]
So the left hand side of (\ref{eqn:commutators}) reduces to
\[
 -sf + gt = ft - tf, 
\] 
on $\BB_{f,t}$. 
Reordering we get 
\[
	(t - s)f = (f - g)t.
\]
Arguing as above, we note that for $\xi \in \BB_{f,t}$ we either have $f\xi 
\in \supp(t)$ and $t\xi \in \supp(f)$ or we have $f\xi \in \supp(s)$ and $t\xi 
\in \supp(g)$. 
Also as above, we see that in both those cases we get $[f,t]\xi = 0$.
Hence on $\BB_{f,t}$ we have
\[
	0 = [f,t] + [g,s] = [f,s] + [g,t].
\]
So (\ref{eqn:commutators}) is satisfied on $\BB_{f,t}$. 

On $\BB_{g,s}$ we have $[f,t] = 0$ and $[f,s] + [g,t] = fs - tg$, so that 
after reordering the terms, the left hand side of (\ref{eqn:commutators}) 
reads:
\[
	(f - g)s = (t - s)g.
\]
Similar to the above, we note that for $\xi \in \BB_{g,s}$ we either have 
$s\xi \in \supp(f)$ and $g\xi \in \supp(t)$, or we have $s\xi \in \supp(g)$ 
and 
$g\xi \in \supp(s)$.
In either case, we deduce that $[g,s] = 0$. 
Hence $[f,t] + [g,s] = 0$ on $\BB_{g,s}$, and so
(\ref{eqn:commutators}) holds on $\BB_{g,s}$. 

Finally, we see that $[f,s] = 0$ and $[f,t] + [g,s] = ft - sg$ on 
$\BB_{g,t}$.
By reordering the terms, we can write the left hand side of 
(\ref{eqn:commutators}) as
\[
	(s - t)g = (f - g)t.
\]
We conclude, as above, that for $\xi \in \BB_{g,s}$ we either have $g\xi \in 
\supp(s)$ and $t\xi \in \supp(f)$, or we have $g\xi \in \supp(t)$ and $t\xi 
\in 
\supp(g)$.
Either way, we get that $[g,t] = 0$ and therefore that $[f,s] + [g,t] = 0$ on 
$\BB_{g,t}$. Hence (\ref{eqn:commutators}) holds on $\BB_{g,t}$. 
\end{proof}

\begin{lemma}\label{lem:joint spectrum}
Let $G$ be a free abelian group with basis $\BB$. Let 
$f,g,s,t\in\End(G)$ satisfy
\begin{enumerate}
\item[(i)] the image of a basis vector is another basis vector or 
zero;
\item[(ii)] $G=\supp(f)\sqcup \supp(g)\sqcup\{0\}=\supp(s)\sqcup 
\supp(t)\sqcup\{0\}$; and 
\item[(iii)] $f + g, s + t$ and $f - g, s - t$ are pairs of commuting 
automorphisms. 
\end{enumerate} 
If $0\not=q(w,z) \in \Z[w,z]$ with $q(f+g,s+t) = 0$, then there exists 
$0\not=\tilde{q}(w,z)\in \Z[w,z]$ with $\tilde{q}(f-g,s-t) = 0$.
\end{lemma}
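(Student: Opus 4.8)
The plan is to replace the four endomorphisms by the two pairs of commuting automorphisms $U:=f+g$, $V:=s+t$ and $W:=f-g$, $V':=s-t$ (all automorphisms, with $[U,V]=0=[W,V']$ by (iii)), to decompose $G$ over the orbits of $\langle U|_\BB,V|_\BB\rangle$ on $\BB$, and on each orbit to recognise $(W,V')$ as a controlled ``sign twist'' of $(U,V)$. Throughout I would use the elementary consequences of (i)--(ii): every basis vector lies in exactly one of $\supp(f),\supp(g)$, and $f$ sends the vectors in $\supp(f)$ to basis vectors exactly as $U$ does while killing those in $\supp(g)$ (symmetrically for $g$, and for $s,t$). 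Hence $W=UD_1$, where $D_1$ is the diagonal involution with $D_1\xi=\xi$ for $\xi\in\BB\cap\supp(f)$ and $D_1\xi=-\xi$ for $\xi\in\BB\cap\supp(g)$; likewise $V'=VD_2$ with $D_2$ a diagonal involution. In particular $U$ and $V$ permute $\BB$, while $W=UD_1$, $W^{-1}=D_1U^{-1}$, $V'$ and $V'^{-1}$ permute the set $\widehat\BB:=\{\pm\xi:\xi\in\BB\}$ of signed basis vectors.

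Next I would write $\BB$ as a disjoint union of orbits $O$ of the abelian permutation group $\Gamma:=\langle U|_\BB,V|_\BB\rangle$. Each $G_O$ (the free $\Z$-submodule of $G$ spanned by $O$) is invariant under $U,V,D_1,D_2$, hence under $W,V'$, and $G=\bigoplus_O G_O$; so it suffices to produce one nonzero $\tilde q\in\Z[w,z]$ with $\tilde q(W,V')|_{G_O}=0$ for every $O$, and I claim $\tilde q(w,z):=q(w^2,z^2)$ works. Fix $O$. The image of $\Gamma$ among the permutations of $O$ is abelian and transitive, hence acts freely, so $O$ is a torsor under $\Z^2/\Lambda_O$ for a subgroup $\Lambda_O\subseteq\Z^2$, and $G_O$ — as a module over $\Z[w^{\pm1},z^{\pm1}]$ with $w,z$ acting as $U,V$ — is isomorphic to $\Z[w^{\pm1},z^{\pm1}]/I_O$ with $I_O=(w^kz^l-1:(k,l)\in\Lambda_O)$, under which $q(U,V)=0$ becomes $q\in I_O$ (so $\Lambda_O\neq0$, as $q\neq0$). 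Since $w\mapsto w^2,z\mapsto z^2$ carries each generator $w^kz^l-1$ of $I_O$ to the generator $w^{2k}z^{2l}-1$, it maps $I_O$ into $I_O$; thus $q(w^2,z^2)\in I_O$ as well.

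Now form the double cover $\widehat O:=\{\pm\xi:\xi\in O\}\subseteq G_O$, on which $\widehat\Gamma:=\langle W|_{\widehat O},V'|_{\widehat O}\rangle$ acts (abelian), sitting over the free $\Gamma$-action on $O$ via a two-to-one equivariant map. If $\widehat O$ is a union of two $\widehat\Gamma$-orbits, then $-1$ interchanges them and each maps bijectively and equivariantly onto $O$, so the resulting change of basis gives $(G_O,W,V')\cong(G_O,U,V)$ as modules, and $q(W^2,V'^2)=0$ since $q(w^2,z^2)\in I_O$. If $\widehat O$ is a single $\widehat\Gamma$-orbit, then $\widehat\Gamma$ acts freely on $\widehat O$ (faithful, transitive, abelian), so $\widehat O\cong\widehat\Gamma\cong\Z^2/\widehat\Lambda_O$, and $-1\in\widehat\Gamma$ generates the order-$2$ kernel of the projection of $\widehat\Gamma$ onto the permutations it induces on $O$; writing this generator as (the image of) $w^az^b$ — so $2(a,b)\in\widehat\Lambda_O$ and $(a,b)\notin\widehat\Lambda_O$ — one finds $\widehat\Lambda_O\subset\Lambda_O$ of index $2$, $I_O=\widehat I_O+(w^az^b-1)$, and $(G_O,W,V')\cong\Z[w^{\pm1},z^{\pm1}]/(\widehat I_O+(1+w^az^b))$. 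Writing $q=h+c\,(w^az^b-1)$ with $h\in\widehat I_O$ and applying $w\mapsto w^2,z\mapsto z^2$ then puts $q(w^2,z^2)$ into $\widehat I_O$ (the lattice $\widehat\Lambda_O$ is closed under doubling and $2(a,b)\in\widehat\Lambda_O$), hence into the annihilator of $G_O$, so again $q(W^2,V'^2)=0$. In either case $\tilde q(W,V')=q(W^2,V'^2)$ vanishes on $G_O$, and $\tilde q\neq0$ since $q\neq0$.

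The \emph{main obstacle} is the single-orbit case: one must see that the double cover forces $(G_O,W,V')$ to be the twisted group ring $\Z[\Z^2/\widehat\Lambda_O]/(1+w^az^b)$, so that $W$ and $V'$ behave like $\pm$-square roots of $U$ and $V$, and that the substitution $w\mapsto w^2$, $z\mapsto z^2$ is exactly what absorbs the twist. The structural reductions coming from (i)--(ii), the orbit decomposition, and the two-orbit case are routine bookkeeping.
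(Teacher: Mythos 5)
Your argument is correct, but it takes a genuinely different route from the paper. The paper's proof is purely combinatorial: after arranging $q(w,z)=\sum_{i,j=1}^n k_{i,j}w^iz^j$, it partitions the basis $\BB$ according to choice functions $\varphi$ that record, for each pair $(i,j)$, the unique word in $\{f,g\}^i\{s,t\}^j$ not vanishing on a given basis vector; on each piece $\BB_\varphi$ it flips the signs of the coefficients $k_{i,j}$ according to the number of occurrences of $g$ and $t$ in $\varphi(\Omega^{i,j})$ to get polynomials $q_\varphi$ with $q_\varphi(f-g,s-t)=0$ on $\BB_\varphi$, and then takes $\tilde q=\prod_\varphi q_\varphi$. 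You instead exploit the structure behind the hypotheses: $f+g$ and $s+t$ permute $\BB$, $f-g=(f+g)D_1$ and $s-t=(s+t)D_2$ are diagonal sign twists, and on each orbit the module is $\Z[w^{\pm1},z^{\pm1}]$ modulo a lattice ideal, with the twisted pair corresponding either to an untwisted copy (two signed orbits) or to the quotient by $\widehat I_O+(1+w^az^b)$ with $\Lambda_O/\widehat\Lambda_O$ of order $2$ (one signed orbit); since doubling carries $\Lambda_O$ into $\widehat\Lambda_O$, the single explicit polynomial $\tilde q(w,z)=q(w^2,z^2)$ annihilates every orbit. I checked the key points: the regularity of transitive abelian permutation actions, the dichotomy for the signed orbit $\widehat O$, the fact that $-1\in\widehat\Gamma$ in the one-orbit case (transitivity plus commutation), and the identification of the annihilator ideals; these all hold, though in a full write-up you should spell out that lattice ideals are generated by $w^kz^l-1$ over any generating set of the lattice and that your use of hypothesis (ii) is only at the level of basis vectors (which is all that is needed, and indeed all that holds in the paper's application). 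The trade-off: your proof yields a much smaller and completely explicit witness, namely $q(w^2,z^2)$ (degree merely doubled, versus the paper's product over exponentially many choice functions), and it isolates exactly where the sign twist matters; the paper's argument is more elementary, needing no orbit/module bookkeeping or group-ring identifications.
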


\begin{proof}
Fix nonzero $q(w,z) \in \Z[w,z]$ with $q(f+g,s+t) = 0$. Since multiplying 
$q(w,z)$ by $wz$ does not change that evaluation at 
$(f+g,s+t)$ is 
zero, we will assume for convenience that 
\[
q(w,z)= \sum_{i,j=1}^{n} k_{i,j} w^i z^j,
\]
for some $n\in\N$.


For every $1\le i,j\le n$ let $\Omega^{i,j}$ be the set of words in 
$\{f,g,s,t\}$ of length $i+j$ and with first $i$ letters in $\{f,g\}$ and last 
$j$ letters in $\{s,t\}$. Under composition, we view each $\Omega^{i,j}$ as a 
subset of $\End(G)$. Let $\CC$ be the collection of choice 
functions on 
$\{\Omega^{i,j}:1\le i,j\le n\}$. We claim that for every $\xi\in \BB$ there 
is a unique choice function $\varphi_\xi\in\CC$ such that 
$\varphi_\xi(\Omega^{i,j})\xi\not=0$ for all $1\le i,j\le n$. To see that 
there is a choice function $\varphi$ with this property, fix $1\le i,j\le n$. 
Let
\[
\omega_{i+j}^{i,j}\in\{s,t\}\text{ with }\omega_{i+j}^{i,j}\xi\not=0,
\] 
\[
\omega_l^{i,j}\in\{s,t\}\text{ with 
}\omega_l^{i,j}(\omega_{l+1}^{i,j}\dots\omega_{i+j}^{i,j}\xi)\not=0
\]
for each $i+1\le l\le i+j-1$, and 
\[
\omega_m^{i,j}\in\{f,g\}\text{ with 
}\omega_m^{i,j}(\omega_{m+1}^{i,j}\dots\omega_{i+j}^{i,j}\xi)\not=0
\] 
for each $1\le m\le i$. Then $\varphi\in\CC$ given by 
$\varphi(\Omega^{i,j})=\omega_1^{i,j}\dots 
\omega_{i+j}^{i,j}$ satisfies $\varphi(\Omega^{i,j})\xi\not=0$ for all $1\le 
i,j\le n$. We now note that, because the supports of $f$ and $g$ partition 
$G\setminus\{0\}$ and the supports of $s$ and $t$ partition $G\setminus\{0\}$, 
every choice of $\omega_1^{i,j},\dots,\omega_{i+1}^{i,j}$ is unique. Hence 
$\varphi_\xi=\varphi$ is the unique choice function with the desired 
property. 

For each 
$\varphi\in\CC$ let
\[
\BB_\varphi=\{\xi\in\BB:\varphi(\Omega^{i,j})\xi\not=0\text{ for all }1\le 
i,j\le n\}.
\]
So $\BB_\varphi=\{\xi\in\BB:\varphi_\xi=\varphi\}$. By the uniqueness of each 
$\varphi_\xi$ we have 
\[
\BB=\bigsqcup_{\varphi\in\CC} \BB_\varphi.
\]
Fix $\varphi\in\CC$. Then on $\BB_\varphi$ we have  
\[
q(f+g,s+t)= \sum_{i,j=1}^nk_{i,j}(f+g)^i(s+t)^j=
\sum_{i,j=1}^nk_{i,j}\varphi(\Omega^{i,j}),
\] 
and
\[
q(f-g,s-t)= 
\sum_{i,j=1}^nk_{i,j}(f-g)^i(s-t)^j=
\sum_{i,j=1}^n(-1)^{\alpha_{(\varphi,i,j)}}k_{i,j}\varphi(\Omega^{i,j}),
\]
where $\alpha_{(\varphi,i,j)}$ is the sum of the number of instances of $g$ 
and $t$ appearing in the 
word $\varphi(\Omega^{i,j})$. Let
\[
q_\varphi(w,z) =\sum_{i,j=1}^n(-1)^{\alpha_{(\varphi,i,j)}}k_{i,j}w^iz^j\in 
\Z[w,z].
\]
Then on $\BB_\varphi$ we have
\begin{align*}
q_\varphi(f-g,s-t) &= 
\sum_{i,j=1}^n(-1)^{2\alpha_{(\varphi,i,j)}}k_{i,j}\varphi(\Omega^{i,j})\\
&= 
\sum_{i,j=1}^n k_{i,j}\varphi(\Omega^{i,j})\\
&=q(f+g,s+t)\\
&=0.
\end{align*}
Define 
\[
\tilde{q}(w,z)=\prod_{\varphi\in\CC}q_\varphi(w,z)\in 
\Z[w,w^{-1},z,z^{-1}].
\]
Note that since each $\Omega^{i,j}$ is finite the set $\CC$ is finite, so the product is finite. 
For each $\xi\in \BB$ we use that polynomials commute to get
\[
\tilde{q}(f-g,s-t)\xi=\Big(\prod_{\varphi\not=\varphi_\xi}q_\varphi(f-g,s-t)\Big)
q_{\varphi_\xi}(f-g,s-t)\xi=0.
\]
Hence $\tilde{q}(f-g,s-t)=0$. Finally, to see that $\tilde{q}(w,z)$ is not the 
zero polynomial, let
\[
i_0=\max\{i: k_{i,j}\not=0\}\quad\text{and}\quad 
j_0=\max\{j: k_{i_0,j}\not=0\}.
\]
Then $\tilde{q}(z,w)$ contains the nonzero term
\[
(-1)^{\sum_{\varphi\in\CC}\alpha_{(\varphi,i_0,j_0)}}k_{i_0,j_0}^{|\CC|}
(w^{i_0}z^{j_0})^{|\CC|}. \qedhere
\]
\end{proof}

\begin{proof}[Proof of Proposition~\ref{prop: properties of unitaries in L2Z}]
Property (1) follows immediately from the characterisation of $\UU(L_{2,\Z})$ 
in Proposition~\ref{prop: char of unitaries in L2Z}. 

For (2) and (3) we let 
$u=\sum_{i=1}^m\lambda_i\alpha_i\beta_i^*$ and 
$v=\sum_{j=1}^n\mu_j\gamma_j\delta_j^*$ be commuting unitaries. Let
\begin{align*}
u_{+,+}=\sum_{\substack{1\le i\le m \\ \lambda_i=1}}\alpha_i\beta_i^*,\qquad 
& u_{+,-}=\sum_{\substack{1\le i\le m \\ \lambda_i=-1}}\alpha_i\beta_i^*,\\  
v_{+,+}=\sum_{\substack{1\le j\le n \\ \mu_j=1}}\gamma_j\delta_j^*,\qquad & 
v_{+,-}=\sum_{\substack{1\le j\le n \\ \mu_j=-1}}\gamma_j\delta_j^*.
\end{align*}
So we have 
\begin{align*}
u=u_{+,+}-u_{+,-},\qquad & u_+=u_{+,+}+u_{+,-},\\
v=v_{+,+}-v_{+,-},\qquad & v_+=v_{+,+}+v_{+,-}.
\end{align*}
Consider the $\Z$-module, or free abelain group, $M$ from Section~\ref{subsec: 
the repn of L2}, 
generated 
by basis $\BB=\{a,b\}^\N$, and the faithful representation $\pi_T \colon L_{2,\Z}\to 
\End(M)$. Let 
\[
f=\pi_T(u_{+,+}),\, g=\pi_T(u_{+,-}),\, s=\pi_T(v_{+,+})\text{ and } 
t=\pi_T(v_{+,-})
\]
For (2) first note that $f,g,s,t$ satisfy the hypotheses (i), (ii) of 
Lemma~\ref{lem: - commute implies + 
commute}. Moreover, we have 
\[
[f-g,s-t]=[\pi_T(u),\pi_T(v)]=\pi_T([u,v])=0.
\]
Lemma~\ref{lem: - commute implies + commute} now implies that
\[
\pi_T([u_+,v_+])=[\pi_T(u_+),\pi_T(v_+)]=[f+g,s+t]=0.
\]
Hence $[u_+,v_+]=0$, and (2) holds. Property (3) holds because (2) gives 
hypothesis (iii) from Lemma~\ref{lem:joint spectrum}, and then 
Lemma~\ref{lem:joint spectrum} gives the result.  
\end{proof}

\begin{proof}[Proof of Theorem~\ref{thm: L2Z nonembedding}]
We know from Proposition~\ref{prop: L2R embeds into tensor} that 
$L_\Z[w,w^{-1},z,z^{-1}]$ embeds 
into 
$L_{2,\Z}\otimes L_{2,\Z}$ by a unital $*$-homomorphism.
We claim that there is \emph{no} unital $*$-homomorphic embedding of $L_\Z[w,w^{-1},z,z^{-1}]$ into $L_{2,\Z}$. 
The result will then follow from this claim 
because any such embedding $L_{2,\Z}\otimes L_{2,\Z}\hookrightarrow L_{2,\Z}$ would 
induce an embedding $L_\Z[w,w^{-1},z,z^{-1}]\hookrightarrow L_{2,\Z}$, 
contradicting the claim. 

To see why the claim holds, let $\phi \colon L_\Z[w,w^{-1},z,z^{-1}]\to L_{2,\Z}$ be 
a unital $*$-homomorphism. 
Then $u=\phi(w)$ and $v=\phi(z)$ are commuting unitaries in 
$L_{2,\Z}$. We know from parts (1) and (2) of Proposition~\ref{prop: 
properties of unitaries in L2Z} that $u_+$ and $v_+$ are then commuting 
unitaries. Since 
$u_+,v_+\in \UU_V$, we know from Proposition~\ref{prop:commutingInV} that 
there is a nonzero polynomial $q(w,z)\in \Z[w,z]$ with $q(u_+,v_+)=0$. Now (3) 
of Proposition~\ref{prop: properties of unitaries in L2Z} says there is a 
nonzero polynomial 
$\tilde{q}(w,z)\in L_\Z[w,w^{-1},z,z^{-1}]$ with $\tilde{q}(u,v)=0$. So 
$\phi(\tilde{q}(w,z))=\tilde{q}(u,v)=0$, and hence $\phi$ is not injective. 
This proves the 
claim.
\end{proof}

\section{The case $L_{2,R}\otimes L_{2,R}$}\label{sec: general tensor 
case}

In this section we make some remarks about what our results indicate about the general question of whether $L_{2,R} \otimes L_{2,R}$ embeds into $L_{2,R}$. 
While we cannot answer the question completely we can give some partial results. 

Our result for $R=\Z$ depends on Proposition~\ref{prop: char of unitaries in L2Z}, which gives us  a nice characterisation of the unitaries in $L_{2,\Z}$. 
Since the difference in the characterisations of a general unitary and a unitary in $\cU_V$ is so minor (the allowance for minus signs in the former), Proposition~\ref{prop: properties of unitaries in L2Z} allows us to bump up the non-embedding result Theorem~\ref{thm: nonembedding of Laurent} which uses $\cU_V$ to the argument required to get a non-embedding $L_{2,\bbZ}\otimes L_{2,\bbZ}\not\hookrightarrow L_{2,\bbZ}$. 
In the general setting we lose such tight control on the unitaries as the following example illustrates.

\begin{example}
Consider the field $K = \R$.
There is a unital embedding of $M_2(\R)$ into $L_{2,\R}$ such that 
\[
  \begin{pmatrix}
      \lambda_{11} & \lambda_{12} \\
      \lambda_{21} & \lambda_{22}
  \end{pmatrix}
  \mapsto
  \lambda_{11}aa^* + \lambda_{12} ab^* + \lambda_{21} ba^* + \lambda_{22} bb^*.   
\]
So for any angle $\theta$ we can map the rotation unitary for $\theta$ into $L_{2,\R}$ to get a unitary 
\[
(\cos\theta)aa^*-(\sin\theta)ab^*+(\sin\theta)ba^*+(\cos\theta)bb^* \in L_{2,\R}.
\]
We immediately see that the coefficients of unitaries in $L_{2,\R}$ can take uncountably many values.
Furthermore, we cannot simply throw away the coefficients, like in Proposition \ref{prop: properties of unitaries in L2Z}, since $aa^* + ba^* + ab^* + bb^*$ is not a unitary in $L_{2,\R}$.
\end{example}

While we cannot see how to generalise our techniques beyond the case $R = \Z$, we can still apply our results to prove results about more general rings. 
Namely, we can show that if $K$ is a field of characteristic $0$, then any potential embedding of $L_{2,K} \otimes L_{2,K}$ into $L_{2,K}$ must use the coefficients. 

To make the claim precise we define 
\[	
  L_{2,R}^1 = \left\{ x \in L_{2,R} : x=\sum_{i=1}^n \alpha_i\beta_i^* \text{ for some distinct pairs } \alpha_i, \beta_i \in \{a,b\}^* \right\}.
\]
That is, $L_{2,R}^1$ consists of the elements of $L_{2,R}$ that can be written without coefficients. 

\begin{proposition} \label{prop: L2KtensorL2K field matters}
Let $K$ be a field and let $\phi \colon L_{2,K} \otimes L_{2,K} \to L_{2,K}$ be a unital $*$-homomorphism. 
If $K$ has characteristic $0$, then at least one of $\phi(a \otimes 1), \phi(b \otimes 1), \phi(1 \otimes a)$, and $\phi(1 \otimes b)$ will not be in $\newspan_{\Z}(L_{2,K}^1)$.
\end{proposition}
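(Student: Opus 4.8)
The plan is to argue by contradiction, reducing the characteristic-$0$ field case to the already-established machinery over $\Z$. Suppose $\phi\colon L_{2,K}\otimes L_{2,K}\to L_{2,K}$ is a unital $*$-homomorphism with all four of $\phi(a\otimes 1)$, $\phi(b\otimes 1)$, $\phi(1\otimes a)$, $\phi(1\otimes b)$ lying in $\newspan_{\Z}(L_{2,K}^1)$. First I would observe that since $a,b$ generate $L_{2,K}$ as a $*$-algebra, the images $u=\phi(w)$ and $v=\phi(z)$ of the two canonical commuting unitaries $w=(a\otimes 1)(b\otimes 1)^*+\ldots$ — more precisely, I would feed in the unitaries $w,z\in L_K[w,w^{-1},z,z^{-1}]$ via the embedding of Proposition~\ref{prop: L2R embeds into tensor}, so that $u=\phi(w)$ and $v=\phi(z)$ are commuting unitaries in $L_{2,K}$ — have coefficients that are \emph{integers}. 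Indeed, multiplying the generators' images together and taking $*$-adjoints only adds and multiplies the coefficients, so $u,v\in\newspan_\Z(L_{2,K}^1)$; being unitaries and hence expressible with the same monomials as their inverses, the collecting-like-terms argument of Proposition~\ref{prop: char of unitaries in L2Z} (which never used that $\Z$ is the base ring, only that the coefficients live in a subring of the base field isomorphic to $\Z$, i.e. characteristic $0$) applies and shows $u,v$ are in fact of the form $\sum\lambda_i\alpha_i\beta_i^*$ with $\lambda_i\in\{-1,1\}$ and $\bigsqcup Z(\alpha_i)=\{a,b\}^\N=\bigsqcup Z(\beta_i)$.

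Once $u$ and $v$ have this ``$L_{2,\Z}$-like'' form, I would define $u_+,v_+$ exactly as in Notation~\ref{notation: u+} and run the argument of the proof of Theorem~\ref{thm: L2Z nonembedding} verbatim: Lemma~\ref{lem: - commute implies + commute} (whose statement is about endomorphisms of a free abelian group and is base-ring agnostic, applied to the representation $\pi_T$ restricted to the $\Z$-subalgebra generated by $u,v$) gives that $u_+,v_+$ commute; they lie in $\cU_V\subseteq\cU(L_{2,K})$; Proposition~\ref{prop:commutingInV} produces a nonzero $q\in\Z[w,z]\subseteq K[w,z]$ with $q(u_+,v_+)=0$; and Lemma~\ref{lem:joint spectrum} upgrades this to a nonzero $\tilde q\in\Z[w,z]$ with $\tilde q(u,v)=0$. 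Then $\phi(\tilde q(w,z))=\tilde q(u,v)=0$ while $\tilde q(w,z)\neq 0$ in $L_K[w,w^{-1},z,z^{-1}]$, contradicting that $\phi$ composed with the embedding of Proposition~\ref{prop: L2R embeds into tensor} is injective.

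The main obstacle is the bookkeeping in the first paragraph: making sure that ``$\phi$ of each generator has integer coefficients'' really forces $u,v$ themselves, after collection of like terms, to have \emph{unit} integer coefficients and the cylinder-set-partition property. This is where one must be careful, because over a general field $\newspan_\Z(L_{2,K}^1)$ is not closed in any obvious sense and products can a priori produce large integer coefficients; the fix is that $u$ is a unitary with $u^*u=1$, so the diagonal-coefficient computation $\beta_k\xi=\pi_T(u^*u)\beta_k\xi=\lambda_k^2\beta_k\xi$ (valid once Lemma~\ref{lem: distinct betas}'s characteristic-$0$ version is invoked) pins each nonzero coefficient down to $\pm1$. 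I would state explicitly that Lemma~\ref{lem: distinct betas} and Proposition~\ref{prop: char of unitaries in L2Z} go through for any base ring containing $\Z$ as a subring (equivalently, of characteristic $0$), since their proofs only use that $\sum_{i=1}^j\lambda_i^2=1$ has no solution with $j\ge 2$ over $\Z$ — which is exactly where the characteristic-$0$ hypothesis on $K$ is consumed, mirroring Lemma~\ref{lem: char 0 means UV=U1} and Example~\ref{ex:UV not U1}.
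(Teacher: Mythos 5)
Your overall strategy is workable, but as written there is a genuine gap at the very last step. The proposition does not assume that $\phi$ is injective, yet your contradiction is ``contradicting that $\phi$ composed with the embedding of Proposition~\ref{prop: L2R embeds into tensor} is injective.'' Proposition~\ref{prop: L2R embeds into tensor} only makes the map $L_K[w,w^{-1},z,z^{-1}]\to L_{2,K}\otimes L_{2,K}$ injective; composing with a possibly non-injective unital $\phi$ could perfectly well kill the nonzero element $\tilde q(w,z)$, and then producing $\tilde q(u,v)=0$ contradicts nothing. The missing ingredient is the observation the paper makes explicitly: $L_{2,K}\otimes L_{2,K}$ is simple, so any unital $*$-homomorphism out of it has zero kernel, hence $\phi$ (and therefore the composite) is automatically injective. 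Without this (or an equivalent argument) your proof does not close.

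Apart from that, your route differs from the paper's in an interesting way: you re-run the $\Z$-machinery inside $L_{2,K}$, checking that Lemma~\ref{lem: distinct betas}, Proposition~\ref{prop: char of unitaries in L2Z} and Proposition~\ref{prop: properties of unitaries in L2Z} survive for unitaries with integer coefficients in characteristic $0$, whereas the paper reduces to Theorem~\ref{thm: L2Z nonembedding} as a black box: by the Cuntz--Krieger Uniqueness Theorem (using characteristic $0$) it identifies $\newspan_{\Z}(L_{2,K}^1)$ with a copy of $L_{2,\Z}$, uses torsion-freeness of $L_{2,K}$ over $\Z$ (hence flatness) to embed $L_{2,\Z}\otimes L_{2,\Z}$ into $L_{2,K}\otimes L_{2,K}$, and then the restriction $\psi$ of $\phi$ is a unital injective $*$-homomorphism $L_{2,\Z}\otimes L_{2,\Z}\to L_{2,\Z}$, which Theorem~\ref{thm: L2Z nonembedding} forbids. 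If you adopt that identification at the outset you can transport your unitaries $u,v$ into $L_{2,\Z}$ and quote the $\Z$-results verbatim, which also repairs two smaller imprecisions in your write-up: (i) what must be restricted is not ``$\pi_T$ to the $\Z$-subalgebra generated by $u,v$'' but the module, i.e.\ the action on the free abelian subgroup of $M$ spanned by the basis $\{a,b\}^\N$, on which the hypotheses of Lemmas~\ref{lem: - commute implies + commute} and~\ref{lem:joint spectrum} make sense; and (ii) you tacitly use that the unitary implementing $w$ in the embedding of $L_R[w,w^{-1}]$ from the companion paper has integer coefficients, so that $\phi(\psi(w))\in\newspan_{\Z}(L_{2,K}^1)$ --- true, since that construction is defined over an arbitrary $R$, but it should be stated; the paper's reduction avoids needing it because the Laurent-polynomial argument is already packaged over $\Z$ inside Theorem~\ref{thm: L2Z nonembedding}.
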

\begin{proof}
Because $K$ has characteristic $0$ we can, by the Cuntz-Krieger Uniqueness Theorem \cite[Theorem 6.5]{TomfordeLeavittOverRing}, embed $L_{2,\Z}$ into $\newspan_{\Z}(L_{2,K}^1)$ (note that $\newspan_{\Z}(L_{2,K}^1)$ is a $\Z$-algebra). 
As a $\Z$-module $L_{2,K}$ is torsion free, since $K$ has characteristic $0$, and hence it is flat as a $\Z$-module. 
Therefore we can use the observation from Remark \ref{rem: facts about flats!} to see that we also get an embedding of $L_{2,\Z} \otimes L_{2,\Z}$ into $L_{2,K} \otimes L_{2,K}$ by simply tensoring the embedding of $L_{2,\Z}$ into $\newspan_{\Z}(L_{2,K}^1)$ with itself. 

If all four of $\phi(a \otimes 1), \phi(b \otimes 1), \phi(1 \otimes a)$, and $\phi(1 \otimes b)$ are in $\newspan_{\Z}(L_{2,K}^1)$, then we can define a unital $*$-homomorphism $\psi \colon L_{2,\Z} \otimes L_{2,\Z} \to L_{2,\Z}$ such that 
\begin{eqnarray*}
  \psi(a \otimes 1) = \phi(a \otimes 1), & \psi(b \otimes 1) = \phi(b \otimes 1), \\
  \psi(1 \otimes a) = \phi(1 \otimes a), & \psi(1 \otimes b) = \phi(1 \otimes b).
\end{eqnarray*}
By construction $\psi$ is the restriction of $\phi$ when we think of $L_{2,\Z} \otimes L_{2,\Z}$ as sitting inside $L_{2,K} \otimes L_{2,K}$.
Since $L_{2,K} \otimes L_{2,K}$ is simple, $\phi$ is necessarily injective, and therefore $\psi$ is injective.
But that contradicts Theorem~\ref{thm: L2Z nonembedding}, so at least one of $\phi(a \otimes 1), \phi(b \otimes 1), \phi(1 \otimes a)$ and $\phi(1 \otimes b)$ will not be in $\newspan_{\Z}(L_{2,K}^1)$.
\end{proof}

We find this result to be interesting since it indicates that if $L_{2,K} \otimes L_{2,K}$ embeds into $L_{2,K}$ for some field of characteristic $0$, then the embedding must somehow reference the field. 

Our results also allows us to say something about potential embeddings for general rings. We say a unitary in an  involutive $R$-algebra has {\em full spectrum} if $q(\alpha) \neq 0$ for all nonzero $q \in R[x]$. (Here, the element $q(\alpha)$ means the obvious thing. See \cite[Remark~2.6]{BrownloweSorensenOne} for the details.)  

\begin{proposition}\label{prop: nonembedding of L2RtensorL2R}
Let $R$ be a commutative ring with unit.
Suppose that $\phi \colon L_{2,R}\otimes L_{2,R} \to L_{2,R}$ is an injective, unital $*$-homomorphism. 
If $u,v \in \cU(L_{2,R})$ have full spectrum then $\phi(u \otimes 1)$ and $\phi(1 \otimes v)$ cannot both be in $\cU_V$.
\end{proposition}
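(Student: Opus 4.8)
The plan is to reduce, via the embedding $L_R[w,w^{-1},z,z^{-1}]\hookrightarrow L_{2,R}\otimes L_{2,R}$ sending $w\mapsto u\otimes 1$ and $z\mapsto 1\otimes v$, to the impossibility of a $*$-embedding of the two-variable Laurent polynomials into $L_{2,R}$ with image of the generators in $\cU_V$. Concretely, suppose toward a contradiction that $\phi\colon L_{2,R}\otimes L_{2,R}\to L_{2,R}$ is injective, unital, $*$-preserving, and that $\phi(u\otimes 1),\phi(1\otimes v)\in\cU_V$. First I would observe that because $u$ and $v$ have full spectrum in $L_{2,R}$, the elements $u\otimes 1$ and $1\otimes v$ have full spectrum in $L_{2,R}\otimes L_{2,R}$: indeed, for a nonzero two-variable polynomial $q$, one expands $q(u\otimes 1, 1\otimes v)=\sum_j (q_j(u))\otimes v^j$ where $q(x,y)=\sum_j q_j(x)y^j$, and uses that the powers $v^j$ are $R$-linearly independent in $L_{2,R}$ (again by full spectrum of $v$) together with flatness/torsion-freeness arguments in the style of Remark~\ref{rem: facts about flats!} to conclude this tensor is nonzero whenever some $q_j\neq 0$. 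Hence the map $w\mapsto u\otimes 1$, $z\mapsto 1\otimes v$ extends to an injective unital $*$-homomorphism $\iota\colon L_R[w,w^{-1},z,z^{-1}]\to L_{2,R}\otimes L_{2,R}$ — this is exactly the content (or a mild variant) of Proposition~\ref{prop: L2R embeds into tensor}, the point being that $u,v$ full spectrum is precisely what is needed for the Laurent embedding to land with the generators going to these specific commuting unitaries.

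Next, composing gives a $*$-homomorphism $\phi\circ\iota\colon L_R[w,w^{-1},z,z^{-1}]\to L_{2,R}$ which is injective (as a composite of injections) and sends $w\mapsto\phi(u\otimes 1)\in\cU_V$ and $z\mapsto\phi(1\otimes v)\in\cU_V$. But Theorem~\ref{thm: nonembedding of Laurent} asserts that no such $*$-embedding exists: any $*$-homomorphism $L_R[w,w^{-1},z,z^{-1}]\to L_{2,R}$ carrying $w,z$ into $\cU_V$ fails to be injective, since $\phi(u\otimes 1)$ and $\phi(1\otimes v)$ are commuting elements of $\cU_V$ (they commute because $u\otimes 1$ and $1\otimes v$ commute and $\phi$ is a homomorphism), so Proposition~\ref{prop:commutingInV} produces a nonzero $q\in R[w,z]$ with $q(\phi(u\otimes 1),\phi(1\otimes v))=0$, i.e. $(\phi\circ\iota)(q(w,z))=0$ while $q(w,z)\neq 0$ in $L_R[w,w^{-1},z,z^{-1}]$. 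This contradicts injectivity, completing the argument.

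The main obstacle I anticipate is the first step: verifying that $u$ and $v$ having full spectrum in $L_{2,R}$ genuinely yields the injective Laurent embedding into the tensor product with the generators mapped as prescribed. Proposition~\ref{prop: L2R embeds into tensor} is stated abstractly, so one must either unwind its proof to see that the embedding of $L_R[w,w^{-1}]$ into $L_{2,R}$ can be arranged to send $w$ to any prescribed full-spectrum unitary (this is the role of \cite[Theorem~4.1]{BrownloweSorensenOne} together with \cite[Remark~2.6]{BrownloweSorensenOne}), or argue directly that $\sum_j q_j(u)\otimes v^j\neq 0$ in $L_{2,R}\otimes L_{2,R}$ whenever not all $q_j$ vanish. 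For the latter, the cleanest route is: since $v$ has full spectrum, $\{v^j\}_{j\in\Z}$ spans a copy of $R[z,z^{-1}]$ inside $L_{2,R}$, which is a free, hence flat, $R$-module, so the natural map $L_{2,R}\otimes_R R[z,z^{-1}]\to L_{2,R}\otimes_R L_{2,R}$ is injective, and the image $\sum_j q_j(u)\otimes v^j$ is nonzero as soon as some $q_j(u)\neq 0$, which holds by full spectrum of $u$. Once this is in hand, the rest is a routine concatenation of the already-established Propositions~\ref{prop: L2R embeds into tensor}, \ref{prop:commutingInV} and Theorem~\ref{thm: nonembedding of Laurent}, exactly mirroring the proof of Theorem~\ref{thm: L2Z nonembedding} but with $\cU_V$ in place of the characterisation of $\cU(L_{2,\Z})$.
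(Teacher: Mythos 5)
Your proposal is correct and follows essentially the same route as the paper: the paper also invokes Proposition~\ref{prop: L2R embeds into tensor} and its proof to see that $w \mapsto u \otimes 1$, $z \mapsto 1 \otimes v$ gives an injective $*$-homomorphism of $L_R[w,w^{-1},z,z^{-1}]$ into $L_{2,R}\otimes L_{2,R}$ (this is where full spectrum is used), and then composes with $\phi$ and applies Theorem~\ref{thm: nonembedding of Laurent}; your direct tensor-expansion argument just makes explicit what the paper delegates to that proof. One small correction: injectivity of $L_{2,R}\otimes_R R[z,z^{-1}] \to L_{2,R}\otimes_R L_{2,R}$ uses flatness (indeed freeness) of the fixed factor $L_{2,R}$, not of $R[z,z^{-1}]$, though since both modules are free the argument goes through as in Remark~\ref{rem: facts about flats!}.
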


\begin{proof}
By Proposition \ref{prop: L2R embeds into tensor} and its proof we see that $z \mapsto u \otimes 1$ and $w \mapsto 1 \otimes v$ defines an injective $*$-homomorphism from $L_R[z,z^{-1},w,w^{-1}]$ to $L_{2,R} \otimes L_{2,R}$. 
Call it $\psi$. 
Then $\phi \circ \psi$ is an injective $*$-homomorphism from $L_R[z,z^{-1},w,w^{-1}]$ to $L_{2,R}$, and so by Theorem~\ref{thm: nonembedding of Laurent} we cannot have that both $\phi(\psi(z)) = \phi(u \otimes 1)$ and $\phi(\psi(w)) = \phi(1 \otimes v)$ are in $\cU_V$.
\end{proof}

Again the result gives an indication that the ring of coefficients is likely to matter to a given embedding. 
However, when working with rings that do not have characteristic $0$ we do not have that $\cU_V = \cU_1$ (Example \ref{ex:UV not U1}) and so exactly how the coefficients matter is murky.

\section{Note added in proof} \label{sec: added in proof}

After an initial version of this manuscript appeared on the arXiv, the second named author and Rune Johansen used the characterization of unitaries in $L_{2,\Z}$ given in Proposition \ref{prop: char of unitaries in L2Z} to give a similar characterization of the projections.
We can use this to conclude that not only is there no \emph{unital} $*$-homomorphic embedding of $L_{2,\Z} \otimes L_{2,\Z}$ into $L_{2,\Z}$ (Theorem~\ref{thm: L2Z nonembedding}) there is no $*$-homomorphic embedding.

\begin{proposition} \label{prop: all projections equivalent to the unit}
All nonzero projections in $L_{2,\Z}$ are Murray-von Neumann equivalent to the unit.
\end{proposition}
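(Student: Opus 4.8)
The plan is to use the characterization of unitaries in $L_{2,\Z}$ from Proposition~\ref{prop: char of unitaries in L2Z} to get an analogous normal form for projections, and then from that normal form read off a partial isometry witnessing Murray--von Neumann equivalence with $1$. Let $e \in L_{2,\Z}$ be a nonzero projection, so $e = e^* = e^2$. First I would write $e$ as a finite $\Z$-linear combination of elements $\alpha\beta^*$, then, exactly as in the proofs of Lemma~\ref{lem: char 0 means UV=U1} and Proposition~\ref{prop: char of unitaries in L2Z}, pad all paths with the identities $\sum_{\gamma \in X}\gamma\gamma^* = 1$ so that all the $\alpha$'s have a common length and all the $\beta$'s have a common length, collect like terms, and discard zero coefficients. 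This writes $e = \sum_{i=1}^n \lambda_i \alpha_i\beta_i^*$ with the pairs $(\alpha_i,\beta_i)$ distinct, $\lambda_i \in \Z\setminus\{0\}$, all $|\alpha_i|$ equal, and all $|\beta_i|$ equal.

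The heart of the argument is to analyze the constraints imposed by $e = e^* = e^2$ using the faithful representation $\pi_T \colon L_{2,\Z} \to \End_\Z(M)$. Since $e^* = e$, we get (after relabelling) that the set of pairs $\{(\alpha_i,\beta_i)\}$ is closed under swapping, with matching coefficients. The key computation is to apply $\pi_T(e^2) = \pi_T(e)$ to a basis vector $\beta_k\xi$ with $\xi$ aperiodic, as in Lemma~\ref{lem: distinct betas}: the leading term produces a coefficient like $\lambda_k \sum_{\beta_i = \beta_k}\lambda_i^2$ or similar (one must track which $\alpha_i\beta_i^*\alpha_j\beta_j^*$ survive), and comparing with the single term $\lambda_k\beta_k\xi$ on the right forces the coefficients to be $\pm 1$ and, crucially, forces the $\beta_i$ (equivalently the $\alpha_i$) to be distinct and their cylinder sets to be disjoint. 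Here the idempotent relation plays the role that the unitary relation $u^*u = 1$ played before. The upshot should be a normal form $e = \sum_{i=1}^n \alpha_i\beta_i^*$ (coefficients all $+1$, since a projection is positive in the relevant sense and the diagonal terms must be $+1$) where $\{Z(\alpha_i)\}$ and $\{Z(\beta_i)\}$ are disjoint families with $\bigsqcup_i Z(\alpha_i) = \bigsqcup_i Z(\beta_i)$ — but this common set need \emph{not} be all of $\{a,b\}^\N$; it is some proper clopen subset when $e \neq 1$.

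Given such a normal form, the Murray--von Neumann equivalence is immediate: set $W = \bigsqcup_{i=1}^n Z(\alpha_i)$, pick a finite prefix code $\{\gamma_1,\dots,\gamma_m\}$ for $W$ (so $W = \bigsqcup_j Z(\gamma_j)$) and note $e = \sum_j \gamma_j\gamma_j^*$ is the range projection of the partial isometry obtained from this code. Now choose any finite prefix code $\{\delta_1,\dots,\delta_m\}$ for \emph{all} of $\{a,b\}^\N$ with the same number $m$ of elements (always possible, e.g.\ refine to $m$ leaves of a binary tree — or better, first arrange by padding that $W$ and its complement are each unions of cylinders so $m$ can be taken to be a power-of-two count, then match cardinalities). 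Put $t = \sum_{j=1}^m \gamma_j\delta_j^*$. Then $t^*t = \sum_j \delta_j\delta_j^* = 1$ and $tt^* = \sum_j \gamma_j\gamma_j^* = e$, so $e \sim 1$.

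The main obstacle I anticipate is the middle step: carefully extracting, from $e^2 = e$ together with $e^* = e$, that the coefficients are all $+1$ and the cylinder sets are disjoint. The self-adjointness pairs up off-diagonal terms but one must rule out cancellation phenomena (a term $\alpha\beta^*$ and $\beta\alpha^*$ with opposite effect) and handle the case where some $\alpha_i$ extends some $\alpha_j$; the aperiodic-path trick of Lemma~\ref{lem: distinct betas} is the right tool, but bookkeeping which products $\alpha_i\beta_i^*\alpha_j\beta_j^*$ are nonzero is more delicate here than in the unitary case because $e$ is not invertible, so one cannot appeal to $e^*e = 1$. Once the normal form is established, the construction of the partial isometry $t$ is routine.
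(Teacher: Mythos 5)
Your last step (building the partial isometry) is fine and is essentially the paper's own argument: once $e=\sum_{i=1}^n\gamma_i\gamma_i^*$ with the $Z(\gamma_i)$ pairwise disjoint, pick paths $\delta_1,\dots,\delta_n$ with pairwise disjoint cylinder sets covering $\{a,b\}^\N$ (a partition of $\{a,b\}^\N$ into exactly $n$ cylinder sets exists for every $n\ge 1$, so your power-of-two manoeuvre is unnecessary) and set $t=\sum_i\gamma_i\delta_i^*$, giving $t^*t=1$ and $tt^*=e$. The genuine gap is the middle step, which you yourself flag and then leave open: that every nonzero projection of $L_{2,\Z}$ can be written as $\sum_i\gamma_i\gamma_i^*$ with disjoint cylinder sets. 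The paper does not re-derive this from $e=e^*=e^2$; it quotes it as \cite[Theorem 4.5]{JohansenSorensen}, and it is a theorem in its own right, not a routine variant of Lemma~\ref{lem: distinct betas}. Your sketch does not go through as described: in Lemma~\ref{lem: distinct betas} and Proposition~\ref{prop: char of unitaries in L2Z} everything hinges on knowing the exact value $\pi_T(u^*u)\beta_1\xi=\beta_1\xi$, so all cross terms must vanish and the coefficient of $\beta_1\xi$ must be $1$. For an idempotent you are comparing $\pi_T(e)^2\beta_k\xi$ with $\pi_T(e)\beta_k\xi$, both of which are unknown linear combinations, so the ``bookkeeping'' you defer (which products $\alpha_i\beta_i^*\alpha_j\beta_j^*$ survive, and which integer cancellations among them are possible) is precisely the mathematical content of the proposition; no argument is given that excludes coefficients other than $+1$ or off-diagonal terms $\alpha\beta^*+\beta\alpha^*$ conspiring with diagonal ones. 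The remark that the coefficients must be $+1$ ``since a projection is positive in the relevant sense'' carries no force either: $L_{2,\Z}$ is merely a $*$-ring over $\Z$, and positivity of the coefficients is exactly what must be proved.

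So, as written, the middle third of your proposal restates the needed normal form rather than proving it, and this is where all the work lies (your final claimed form $e=\sum_i\alpha_i\beta_i^*$ with matching unions also still needs the small observation $e=e^*e=\sum_i\beta_i\beta_i^*$ to become a sum of $\gamma_i\gamma_i^*$'s). If you want a self-contained route, a more promising one is to apply Proposition~\ref{prop: char of unitaries in L2Z} to the self-adjoint unitary $1-2e$ and analyse which reduced forms make $(1-u)/2$ an element of $L_{2,\Z}$ --- this is in the spirit of how the cited characterisation of projections is obtained from the unitary characterisation --- or simply invoke \cite[Theorem 4.5]{JohansenSorensen} as the paper does.
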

\begin{proof}
Let $p \in L_{2,\Z}$ be a projection.
By \cite[Theorem 4.5]{JohansenSorensen} there exist paths $\beta_1, \beta_2, \ldots, \beta_n \in \{a,b\}^*$ with disjoint cylinder sets such that 
\[
	p = \sum_{i=1}^n \beta_i \beta_i^*.
\]
Let $\alpha_1, \alpha_2, \ldots, \alpha_n$ be paths with disjoint cylinder sets whose union is $\{a,b\}^\N$. 
Then $t_i = \alpha_i \beta_i^*$ shows that $\alpha_i \sim \beta_i$, for $i=1,2,\ldots,n$. 
Since $\{\beta_i \beta_i^*\}$ and $\{\alpha_i \alpha_i^*\}$ are two collections of pairwise orthogonal projections, we then get (see \cite[Proposition 4.2.4]{BlackadarKTheoryBook}) that
\[
	p = \sum_{i=1}^n \beta_i \beta_i^* \sim \sum_{i=1}^n \alpha_i \alpha_i^* = 1. \qedhere
\]
\end{proof}

\begin{corollary}
There is no $*$-algebraic embedding of $L_{2,\Z}\otimes L_{2,\Z}$ into $L_{2,\Z}$. 
\end{corollary}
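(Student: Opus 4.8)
The plan is to bootstrap from Theorem~\ref{thm: L2Z nonembedding}, which already rules out \emph{unital} $*$-embeddings, by using the standard ``twisting'' trick described in the introduction, now made available by Proposition~\ref{prop: all projections equivalent to the unit}. First I would suppose, for contradiction, that $\varphi \colon L_{2,\Z} \otimes L_{2,\Z} \to L_{2,\Z}$ is an injective $*$-homomorphism, not necessarily unital. Since $L_{2,\Z} \otimes L_{2,\Z}$ is unital, the element $p = \varphi(1 \otimes 1)$ is a nonzero projection in $L_{2,\Z}$ (nonzero because $\varphi$ is injective). By Proposition~\ref{prop: all projections equivalent to the unit}, $p$ is Murray--von Neumann equivalent to $1$, so there exists $t \in L_{2,\Z}$ with $t^* t = 1$ and $t t^* = p$.

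Next I would define $\psi \colon L_{2,\Z} \otimes L_{2,\Z} \to L_{2,\Z}$ by $\psi(x) = t^* \varphi(x) t$. One checks directly that $\psi$ is a $*$-homomorphism: it is clearly $\Z$-linear, it respects the involution since $\psi(x)^* = t^* \varphi(x)^* t = t^* \varphi(x^*) t = \psi(x^*)$, and it is multiplicative because $\psi(x)\psi(y) = t^* \varphi(x) t t^* \varphi(y) t = t^* \varphi(x) p \varphi(y) t = t^* \varphi(x) \varphi(1\otimes 1) \varphi(y) t = t^* \varphi(xy) t = \psi(xy)$, using that $\varphi(x) p = \varphi(x)\varphi(1\otimes 1) = \varphi(x)$. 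Moreover $\psi$ is unital, since $\psi(1 \otimes 1) = t^* \varphi(1\otimes 1) t = t^* p t = t^* t t^* t = 1$. Finally $\psi$ is injective: if $\psi(x) = 0$ then $t^* \varphi(x) t = 0$, so $t t^* \varphi(x) t t^* = 0$, i.e. $p \varphi(x) p = 0$; but $p\varphi(x)p = \varphi(1\otimes 1)\varphi(x)\varphi(1\otimes 1) = \varphi(x)$, so $\varphi(x) = 0$ and hence $x = 0$ by injectivity of $\varphi$.

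Thus $\psi$ is a unital injective $*$-homomorphism $L_{2,\Z}\otimes L_{2,\Z} \to L_{2,\Z}$, directly contradicting Theorem~\ref{thm: L2Z nonembedding}. This contradiction shows no such $\varphi$ exists. The argument is essentially routine once Proposition~\ref{prop: all projections equivalent to the unit} is in hand; the only mild subtlety is being careful that the equivalence $p \sim 1$ in Proposition~\ref{prop: all projections equivalent to the unit} really gives a partial isometry $t$ with $t^*t = 1$ and $tt^* = p$ rather than the reverse, but this is just a matter of taking $t$ or $t^*$, so there is no genuine obstacle here — the real content of the corollary lies entirely in Propositions~\ref{prop: char of unitaries in L2Z} and~\ref{prop: all projections equivalent to the unit} and Theorem~\ref{thm: L2Z nonembedding}, which we may assume.
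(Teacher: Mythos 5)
Your proof is correct and follows essentially the same route as the paper: use Proposition~\ref{prop: all projections equivalent to the unit} to find $t$ with $t^*t=1$, $tt^*=\phi(1\otimes 1)$, twist by $\psi(x)=t^*\phi(x)t$ to get a unital embedding, and contradict Theorem~\ref{thm: L2Z nonembedding}. Your verification of the homomorphism, unitality and injectivity of $\psi$ is just a more explicit spelling-out of what the paper summarises as ``$\psi$ is an embedding if and only if $\phi$ is.''
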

\begin{proof}
Let $\phi \colon L_{2,\Z}\otimes L_{2,\Z} \to L_{2,\Z}$ be a nonzero $*$-homomorphism and let $p = \phi(1 \otimes 1)$.
By Proposition~\ref{prop: all projections equivalent to the unit} we have $p \sim 1$, so we can pick a $t \in L_{2,\Z}$ such that $t^*t = 1$ and $tt^* = \phi(1 \otimes 1)$.
Then we can define a $*$-homomorphism $\psi \colon L_{2,\Z}\otimes L_{2,\Z} \to L_{2,\Z}$ by $\psi(x) = t^*\phi(x)t$. 
Note that $\psi$ is unital and that $\psi$ is an embedding if and only if $\phi$ is. 
But by Theorem~\ref{thm: L2Z nonembedding}, $\psi$ cannot be an embedding and hence $\phi$ is not an embedding. 
\end{proof}

\section*{Acknowledgements}

The authors thank Gene Abrams, Pere Ara, and Enrique Pardo for helpful comments on an earlier version of this paper that helped improve it. 
The second-named author is grateful for enlightening conversations about endomorphisms of $\OO_2$ and Thompson's group $V$ with Wojciech Szyma{\'n}ski. 

During work on this project the first named author visited the University of Oslo. 
The first named author would like to thank the Institute for Mathematics \& Its Applications at the University of Wollongong for providing financial support, and the University of Oslo and Nadia Larsen for also providing financial support, and for their hospitality. 

Part of this work was done while the second named author was supported by an individual post doctoral grant from the Danish Council for Independent Research \textbar {} Natural Sciences. 

\bibliographystyle{plain}


\end{document}